\pdfsuppresswarningpagegroup=1
\PassOptionsToPackage{hyphens}{url}
\documentclass[a4paper,USenglish,cleveref,autoref,thm-restate,nolineno]{socg-lipics-v2021}

\pdfoutput=1 
\hideLIPIcs  


\bibliographystyle{plainurl}

\title{On the Twin-Width of Smooth Manifolds} 


\author{\'Edouard Bonnet}{Univ Lyon, CNRS, ENS de Lyon, Université Claude Bernard Lyon 1, LIP UMR5668, France \and \url{http://perso.ens-lyon.fr/edouard.bonnet/}}{edouard.bonnet@ens-lyon.fr}{https://orcid.org/0000-0002-1653-5822}{}

\author{Kristóf Huszár}{Institute of Geometry, Graz University of Technology, Austria \and \url{https://kristofhuszar.github.io/}}{kristof.huszar@tugraz.at}{https://orcid.org/0000-0002-5445-5057}{}
\authorrunning{\'E. Bonnet and K. Huszár} 

\Copyright{\'Edouard Bonnet and Kristóf Huszár} 

\ccsdesc[500]{Mathematics of computing~Graph theory}
\ccsdesc[500]{Mathematics of computing~Geometric topology}  

\keywords{Smooth manifolds, triangulations, twin-width, Whitney embedding theorem, structural graph parameters, computational topology} 

\category{} 

\relatedversion{} 


\funding{The authors have been supported by the French National Research Agency through the project TWIN-WIDTH with reference number ANR-21-CE48-0014.}


\nolinenumbers 

\EventEditors{John Q. Open and Joan R. Access}
\EventNoEds{2}
\EventLongTitle{42nd Conference on Very Important Topics (CVIT 2016)}
\EventShortTitle{CVIT 2016}
\EventAcronym{CVIT}
\EventYear{2016}
\EventDate{December 24--27, 2016}
\EventLocation{Little Whinging, United Kingdom}
\EventLogo{}
\SeriesVolume{42}
\ArticleNo{23}


\usepackage{mathtools}
\usepackage{overpic}


\newcommand{\altaltmanifold}{\mathscr{Q}}
\newcommand{\altmanifold}{\mathscr{N}}
\newcommand{\atlas}{\mathcal{A}}
\newcommand{\ball}{\mathscr{B}}
\newcommand{\bigsimplex}{\Sigma}
\newcommand{\clstar}{\overline{\operatorname{st}}}
\newcommand{\class}{\mathscr{G}}
\newcommand{\coloring}{\mathscr{C}}
\newcommand{\complex}{\mathscr{X}}
\newcommand{\diagram}{\mathscr{D}}
\newcommand{\dual}{\Gamma}
\newcommand{\embedding}{\mathscr{E}}

\newcommand{\griddiag}{D}
\newcommand{\hcubic}{\mathbf{H}}
\newcommand{\image}{\operatorname{im}}
\newcommand{\integer}{m}
\newcommand{\interior}{\operatorname{int}}
\newcommand{\knot}{\mathscr{K}}

\newcommand{\manifold}{\mathscr{M}}

\newcommand{\polydecomp}{\mathscr{R}}
\newcommand{\prism}{P}
\newcommand{\prisms}{\mathscr{P}}

\newcommand{\red}{\operatorname{red}}
\newcommand{\seq}{\mathfrak{S}}
\newcommand{\set}{\mathscr{S}}

\newcommand{\sd}{\operatorname{subd}}
\newcommand{\surface}{S}
\newcommand{\tri}{\mathscr{T}}

\newcommand{\tww}{\operatorname{tww}}
\newcommand{\vertices}{\mathscr{V}}
\newcommand{\width}{\operatorname{w}}

\newcommand{\geomrel}[1]{\|{#1}\|}

\newcommand{\mynum}[1]{{\color{lipicsGray}\sffamily\bfseries{#1}}}

\renewcommand{\leq}{\leqslant}
\renewcommand{\geq}{\geqslant}

\newcommand\smallscale{0.884615384615} 


\begin{document}

\maketitle

\begin{abstract}
Building on Whitney's classical method of triangulating smooth manifolds, we show that every compact $d$-dimensional smooth manifold admits a triangulation with dual graph of twin-width at~most~$d^{O(d)}$. In particular, it follows that every compact \mbox{$3$-manifold} has a triangulation with dual graph of bounded twin-width. This is in sharp contrast to the case of treewidth, where for any natural number $n$ there exists a closed 3-manifold such that every triangulation thereof has dual graph with treewidth at least $n$.
To establish this result, we bound the twin-width of the incidence graph of the $d$-skeleton of the second barycentric subdivision of the $2d$-dimensional hypercubic honeycomb. We also show that every compact, piecewise-linear (hence smooth) $d$-dimensional manifold has triangulations where the dual graph has an arbitrarily large twin-width.
\end{abstract}

\section{Introduction}
\label{sec:intro}

Structural graph parameters have become increasingly important in computational topology in the past two decades. This is mainly due to the emergence of fixed-parameter tractable (FPT) algorithms for problems on knots, links \cite{burton2018homfly, makowsky2005coloured, makowsky2003parameterized, maria2021parametrized} and 3-manifolds \cite{burton2017courcelle, burton2016parameterized, burton2018algorithms, pettersson2014fixed, burton2013complexity},\footnote{Also see \cite{bagchi2016efficient} for an FPT algorithm checking tightness of (weak) pseudomanifolds in arbitrary dimensions.} most of which are known to be NP-hard in general.
Although these FPT algorithms may have exponential worst-case running time, on inputs with bounded \emph{treewidth} they are guaranteed to terminate in polynomial (or even in linear) time.\footnote{Here the term \emph{input} refers either to a link diagram $\diagram$, or to a 3-manifold triangulation $\tri$. In the first case the treewidth means the treewidth of $\diagram$ considered as a $4$-regular graph, in the second case it means the treewidth of the dual graph $\dual(\tri)$ of $\tri$. The running times are measured in terms of the \emph{size} of the input. The size of a link diagram is defined as the number of its crossings, and the size of a 3-manifold triangulation is the number of its tetrahedra. More definitions are given in \Cref{sec:prelims}.} In addition, some of these algorithms have been implemented in software packages such as \texttt{Regina}, providing practical tools for researchers in topology \cite{burton2013regina,regina}.\footnote{See \cite{barnatan2007fast} for an implemented algorithm to effectively compute certain Khovanov homology groups of knots. This algorithm is conjectured to be FPT in the \emph{cutwidth} of the input knot diagram, cf.\ \cite[Section 6]{barnatan2007fast}.}

The success of the above algorithms naturally leads to the following question.
Given a~3-manifold $\manifold$ (resp.\ knot~$\knot$), what is the smallest treewidth that the dual graph of a~triangulation of $\manifold$ (resp.\ a~diagram of $\knot$) may have?\footnote{For links and knots, this question was respectively asked in \cite[Section 4]{makowsky2003parameterized} and \cite[p.\ 2694]{MFOReports}.}
Motivated by this challenge, in recent years several results have been obtained that reveal quantitative connections between \emph{topological invariants} of knots and 3-manifolds, and \emph{width parameters} associated with their diagrams \cite{mesmay2019treewidth, lunel2023structural} and triangulations \cite{huszar2020combinatorial, huszar2022pathwidth, huszar2019manifold, huszar2023width, huszar2019treewidth, maria2019treewidth}, respectively.
It turns out that topological properties of 3-manifolds may prohibit the existence of ``thin'' triangulations.\footnote{In particular, for non-Haken 3-manifolds of large Heegaard genus \cite{huszar2019treewidth} or Haken 3-manifolds with a~``complicated'' JSJ decomposition \cite{huszar2023width}, the dual graph of \emph{any} triangulation must also have large treewidth.}\textsuperscript{,}\footnote{For results where the treewidth of a knot diagram is bounded below by topological properties of the underlying knot, see  \cite{mesmay2019treewidth, lunel2023structural}.} At the same time, geometric or topological descriptions of 3-manifolds can also give strong hints on how to triangulate them so that their dual graphs have constant pathwidth or treewidth, or at least bounded in terms of a topological invariant of these 3-manifolds.\footnote{This is case with Seifert fibered spaces~\cite{huszar2019manifold} or hyperbolic 3-manifolds \cite{huszar2022pathwidth, maria2019treewidth}.}

In this work we establish similar results for another graph parameter called \emph{twin-width}.\footnote{We will denote by $\tww(G)$ the twin-width of a~graph~$G$.} Introduced in \cite{bonnet2022twin-width-1}, this notion has been subject of growing interest and found many algorithmic applications in recent years.\footnote{For an introduction to twin-width and an overview of its applications, see \cite{bonnet2024habil} and the references therein.}
Namely, on classes of effectively\footnote{A~class has \emph{effectively bounded twin-width} if it has bounded twin-width, and contraction sequences of width $O(1)$ (objects witnessing the twin-width upper bound) can be found in polynomial time; see~\cref{ssec:tww} for the definitions of \emph{contraction sequences} and \emph{twin-width}.} bounded twin-width, first-order properties can be decided efficiently\footnote{More precisely, there is a~\emph{fixed-parameter tractable} algorithm that, given a~first-order sentence $\varphi$ and an $n$-vertex graph $G$ with a~contraction sequence of width~$d$, decides if $G$ satisfies $\varphi$ in time $f(\varphi,d) \cdot n$, for some computable function~$f$.}~\cite{bonnet2022twin-width-1} (also see~\cite{bonnet2021twin-width-3} for improved running times on specific problems definable in first-order logic), first-order queries can be enumerated fast~\cite{gajarsky2022twin-width}, and enhanced approximation algorithms can be designed for several graph optimization problems~\cite{berge2023approximating}.
Besides, classes of bounded twin-width are fairly general and broad.
They for instance include classes of bounded tree-width, and even its dense analogue, clique-width, classes excluding a~minor, proper permutation classes, $d$-dimensional grid graphs~\cite{bonnet2022twin-width-1}, some classes of cubic expanders~\cite{bonnet2022twin-width-2}, segment intersection graphs without biclique subgraphs of a~fixed size~\cite{bonnet2022twin-width8}, and modifications definable in first-order logic (called first-order \emph{transductions}) of all these classes~\cite{bonnet2022twin-width-1}. 
We observe that the definition of twin-width can readily be lifted to binary structures (i.e., edge-colored multigraphs).

Our first result shows that a compact $d$-dimensional smooth manifold always has a~triangulation with dual graph of twin-width bounded in terms of~$d$.

\begin{theorem}
\label{thm:tww-mfd}
Any compact \mbox{$d$-dimensional} smooth manifold admits a triangulation with dual graph of twin-width at~most~$d^{O(d)}$.
\end{theorem}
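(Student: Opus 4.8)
The plan is to implement the route signalled in the abstract: embed the manifold in Euclidean space, triangulate it by Whitney's method relative to a fine hypercubic honeycomb, and then pull a twin-width bound back from a canonical combinatorial graph attached to that honeycomb. First I would invoke the (strong) Whitney embedding theorem to realise the compact smooth $d$-manifold $\manifold$ as a smooth submanifold of $\mathbb{R}^{2d}$. Next, for a sufficiently fine scale $\varepsilon > 0$ --- small relative to the local geometry (reach) of $\manifold$ --- let $\hcubic$ be the tiling of $\mathbb{R}^{2d}$ by axis-parallel cubes of side $\varepsilon$ and pass to its second barycentric subdivision $\sd^2(\hcubic)$, a locally finite simplicial complex. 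After an arbitrarily small generic isotopy, $\manifold$ is in general position with respect to $\sd^2(\hcubic)$, and since $\dim\manifold = d$ is exactly half the ambient dimension, Whitney's construction produces a triangulation $\tri$ of $\manifold$ whose simplices are the transverse slices of $\manifold$ through the faces (or dual cells of the faces) of $\sd^2(\hcubic)$.

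The combinatorial payoff is what I would exploit next. Whitney's construction --- in the refined form needed here --- attaches to each simplex of $\tri$ a face of $\sd^2(\hcubic)$ carrying it, in such a way that two simplices of $\tri$ are adjacent precisely when the carrying faces are incident, while the general-position and codimension count confine the relevant faces to the $d$-skeleton of $\sd^2(\hcubic)$; consequently, after the harmless quantifier-free operation of smoothing away the degree-two vertices recording the $(d-1)$-faces of $\tri$, the dual graph $\dual(\tri)$ is an induced subgraph of the incidence graph $I_d$ of the $d$-skeleton of $\sd^2(\hcubic)$. Since twin-width does not increase under induced subgraphs and grows only by a controlled amount under a fixed first-order transduction, this reduces the theorem to the single, manifold-independent inequality $\tww(I_d) \le d^{O(d)}$.

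To prove that inequality I would use the $\mathbb{Z}^{2d}$-periodicity of $\hcubic$: the graph $I_d$ is obtained from the $2d$-dimensional grid graph by substituting into each vertex one fixed finite gadget --- the portion of the $d$-skeleton of $\sd^2(\hcubic)$ supported on a single cube together with its lower faces --- and wiring adjacent gadgets along a fixed pattern. That gadget has size at most $3^{2d}$ (the number of faces of a $2d$-cube) times the blow-up of two barycentric subdivisions, hence at most $d^{O(d)}$. Now $2d$-dimensional grid graphs have twin-width bounded by an explicit function of $d$ \cite{bonnet2022twin-width-1} --- at worst $d^{O(d)}$ --- and the gadget substitution is a bounded-overhead transduction, so a quantitative accounting of how these two bounds combine (equivalently, a hand-built contraction sequence for $I_d$ that first collapses each gadget and then imitates a good contraction sequence of the grid, keeping the red degree within a $d^{O(d)}$ factor) yields $\tww(I_d) \le d^{O(d)}$, and therefore $\tww(\dual(\tri)) \le d^{O(d)}$.

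The main obstacle is precisely this last, quantitative step. The general theorem that first-order transductions preserve \emph{bounded} twin-width is not enough on its own; one needs explicit control of how the gadget size $d^{O(d)}$ and the grid's twin-width interact, which in practice means designing a concrete contraction sequence for $I_d$ whose red degree provably stays $d^{O(d)}$ at every step --- forcing one to understand the local combinatorics of $\sd^2(\hcubic)$ well enough to collapse its gadgets cleanly and to interleave those collapses with a sweep of the grid without any blow-up. A smaller but genuine difficulty is making the reduction of the second paragraph rigorous: one must pin down exactly which faces of $\sd^2(\hcubic)$ carry the simplices of the Whitney triangulation, and verify that two (rather than one) barycentric subdivisions are what make the transversality and star structure uniform enough for $\dual(\tri)$ to be an \emph{induced} subgraph of $I_d$ --- not merely a subgraph, which would be useless since twin-width is not subgraph-monotone.
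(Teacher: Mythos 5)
Your proposal follows essentially the same route as the paper: strong Whitney embedding into $\mathbb{R}^{2d}$, Whitney triangulation realized inside the $d$-skeleton of the second barycentric subdivision of a hypercubic honeycomb, $\dual(\tri)$ an induced subgraph of the resulting universal dual graph $G_{d,n}$, and a two-phase contraction sequence (collapse local clusters, then sweep). The quantitative step you flag as the main obstacle is exactly what the paper's proof of $\tww(G_{d,n}) \leq d^{O(d)}$ supplies: it colors each $d$-simplex of $(\hcubic^{2d,n})''$ by the lowest dimension of a cube of $\hcubic^{2d,n}$ to whose barycenter's closed star it belongs, contracts each monochromatic cluster (of size $d^{O(d)}$) to a single node, and observes that the collapsed trigraph is a subtrigraph of the $2d$-dimensional \emph{red grid with diagonals}, which has twin-width at most $2(3^{2d}-1)$ by \cite[Lemma~4.4]{bonnet2022twin-width-1} --- rather than the plain grid bound you cite.
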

Since every 3-manifold is smooth \cite{moise1952affine} (also see \cite{ manolescu2016lectures}), the following corollary is immediate.
We recall that $\tww(G)$ denotes the twin-width of a~graph~$G$, and $\dual(\tri)$ denotes the dual graph of a~triangulation $\tri$.

\begin{corollary}
\label{cor:tww-3mfd}
There exists a universal constant $C > 0$ such that every compact \mbox{$3$-dimensional} manifold $\manifold$ admits a triangulation $\tri$ with $\tww(\dual(\tri)) \leq C$.
\end{corollary}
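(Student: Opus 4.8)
The plan is to specialize \Cref{thm:tww-mfd} to dimension $d = 3$ and combine it with the classical triangulability of $3$-manifolds. First I would invoke Moise's theorem~\cite{moise1952affine} (see also~\cite{manolescu2016lectures}): every compact topological $3$-manifold carries a smooth structure, so every compact $3$-manifold $\manifold$ lies within the scope of \Cref{thm:tww-mfd}. This is the only external input needed, and it is already cited in the paragraph preceding the corollary.

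Applying \Cref{thm:tww-mfd} with $d = 3$ then yields a triangulation $\tri$ of $\manifold$ whose dual graph satisfies $\tww(\dual(\tri)) \leq 3^{O(3)}$. Since the exponent and base are now fixed numerical constants, the right-hand side is an absolute constant that does not depend on $\manifold$; taking $C$ to be this value (equivalently, the supremum of the bounds furnished by \Cref{thm:tww-mfd} as $\manifold$ ranges over all compact $3$-manifolds) gives a single universal $C > 0$ with $\tww(\dual(\tri)) \leq C$ for the triangulation $\tri$ produced above.

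There is essentially no obstacle here beyond \Cref{thm:tww-mfd} itself: once that theorem is established, the corollary is a formal consequence of plugging in $d = 3$ and recalling that $3$-manifolds are smooth. The only place where additional work could be invested is in replacing the existential $C$ by an explicit numerical value; that would require tracking the constants through the proof of \Cref{thm:tww-mfd} at $d = 3$ — in particular, bounding the twin-width of the incidence graph of the $3$-skeleton of the second barycentric subdivision of the $6$-dimensional hypercubic honeycomb — but this is routine bookkeeping rather than a new idea, and I would not pursue it for the statement as phrased.
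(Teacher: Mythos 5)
Your argument is exactly the paper's: cite Moise's theorem that every (compact) $3$-manifold admits a smooth structure, apply \Cref{thm:tww-mfd} with $d=3$, and note that the resulting bound $3^{O(3)}$ is a fixed universal constant. This matches the paper's one-line justification preceding the corollary.
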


This is in sharp contrast to the case of treewidth, for which it is known that for every $n \in \mathbb{N}$ there are infinitely many 3-manifolds where the smallest treewidth of the dual graph of \emph{every} triangulation is at least $n$ \cite{huszar2023width, huszar2019treewidth}.
Complementing \Cref{thm:tww-mfd}, we also show that for any fixed $d \geq 3$, the $d$-dimensional triangulations of large twin-width are abundant (\Cref{thm:trg-not-small}). Moreover we show that any piecewise-linear (hence smooth) manifold of dimension at least three admits triangulations with dual graph of arbitrarily large twin-width.

\begin{theorem}
\label{thm:tww-large}
Let $d \geq 3$ be an integer. For every compact $d$-dimensional piecewise-linear manifold $\manifold$ and natural number $n \in \mathbb{N}$, there is a triangulation $\tri$ of $\manifold$ with $\tww(\dual(\tri)) \geq n$.
\end{theorem}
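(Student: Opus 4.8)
The plan is to exhibit, for each $n$, a triangulation $\tri$ of $\manifold$ whose dual graph $\dual(\tri)$ contains, as an \emph{induced} subgraph, a graph of twin-width at least $n$; since twin-width does not increase when passing to induced subgraphs, this gives $\tww(\dual(\tri)) \ge n$. Rather than trying to make $\dual(\tri)$ itself complicated — dual graphs of triangulated balls or spheres are quite constrained — I would realise a fixed abstract graph of large twin-width as the dual graph of a triangulated \emph{thickening} of it, placed inside $\manifold$, exploiting that the boundary facets of this thickening become interior facets of $\tri$, so that no spurious adjacencies appear.

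The first step is that subcubic graphs can have arbitrarily large twin-width. This is a counting argument: for fixed $k$ the class of graphs of twin-width at most $k$ is hereditary and of bounded twin-width, hence \emph{small}, containing at most $c_k^{m} \cdot m!$ labelled $m$-vertex graphs — that is, $2^{(1+o(1))\,m\log m}$ of them — whereas the number of labelled cubic graphs on $m$ vertices is $2^{(3/2 - o(1))\,m\log m}$, which is larger once $m$ is big. Hence for every $k$ there is a (connected, after restricting to a component if necessary) cubic graph $H$ with $\tww(H) > k$. The second step is to thicken $H$. Fix $d \ge 3$, embed $H$ piecewise-linearly in $\mathbb{R}^d$, and let $W$ be a regular neighbourhood of its image: a compact PL $d$-manifold with boundary that deformation retracts onto $H$. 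I would triangulate $W$ along its handle decomposition, using a bounded-size triangulated $d$-ball $B_v$ for each vertex $v$ — carrying $\deg_H(v) = 3 \le d+1$ designated pairwise-disjoint boundary facets — and a triangulated tube $\Delta^{d-1} \times [0,1]$ for each edge, glued along its two end-facets to the designated facets of the two endpoint balls. Since $d$ is fixed, each $B_v$ and each tube uses only $O_d(1)$ top-dimensional simplices, and inspecting the dual graph of this triangulation shows that $\dual(W)$ is isomorphic to the graph $\widehat{H}$ obtained from $H$ by replacing each vertex with the (connected, bounded-size) dual graph of the corresponding $B_v$ and subdividing each edge a bounded number of times.

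Next I would transplant $W$ into $\manifold$. As $\manifold$ has a chart PL-homeomorphic to $\mathbb{R}^d$, the triangulated polyhedron $W$ embeds as a PL subpolyhedron of $\manifold$, hence is a subcomplex of some triangulation $\tri$ of $\manifold$. Moreover $\dual(W)$ is an \emph{induced} subgraph of $\dual(\tri)$: a $(d-1)$-face of a $d$-simplex of $W$ is either interior to $W$, where it is shared with another $d$-simplex of $W$, or it lies on $\partial W$, which sits in the interior of $\manifold$, where it is shared with a $d$-simplex of $\tri$ outside $W$; and two $d$-simplices of $\tri$ meet in at most one facet since $d \ge 2$. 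Hence $\tww(\dual(\tri)) \ge \tww(\dual(W)) = \tww(\widehat{H})$. Finally, $\widehat{H}$ arises from $H$ by a fixed operation, and $H$ in turn is the image of $\widehat{H}$ under a fixed first-order transduction (colour each vertex of $\widehat{H}$ by the gadget it belongs to; then the edge relation of $H$ is first-order definable in this coloured copy of $\widehat{H}$). Since first-order transductions increase twin-width by at most a function depending only on the transduction, taking $H$ with $\tww(H)$ sufficiently large in terms of $n$ and $d$ — possible by the first step — forces $\tww(\widehat{H}) \ge n$, and therefore $\tww(\dual(\tri)) \ge n$.

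The main obstacle, I expect, is the explicit construction in the second step: one has to check that the handle-by-handle simplicial pieces assemble into a genuine triangulation of the manifold $W$ and that its dual graph is the claimed bounded blow-up $\widehat{H}$ of $H$. Two standard PL facts also enter and should be invoked carefully — that a regular neighbourhood of a PL graph is a handlebody, and that a finite polyhedron in a PL manifold is a subcomplex of some triangulation. The transfer of large twin-width from $H$ to $\widehat{H}$ is the other non-routine ingredient, relying on the transduction-invariance of bounded twin-width; the counting bound and the induced-subgraph monotonicity of twin-width are standard.
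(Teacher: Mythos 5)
Your overall plan---realize a graph of large twin-width as an induced subgraph of $\dual(\tri)$ by triangulating a regular neighborhood of a graph embedded in a Euclidean chart and extending to a triangulation of $\manifold$---is exactly the strategy of the paper, but the two arguments diverge in how they source the large-twin-width graph and how they certify its twin-width. The paper works with $(d+1)$-regular graphs and a single $d$-simplex $\sigma_v$ per vertex, so that the dual graph of the thickening is \emph{literally} the $d$-subdivision $\sd_d(G)$ of a $(d+1)$-regular graph $G$; it then proves directly (\Cref{prop:k-reg-s-sub}) that the class $\sd_d(\class_{d+1})$ is not small, hence contains members of arbitrarily large twin-width, and no further transfer argument is needed. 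You instead count cubic graphs, use a more general vertex gadget $B_v$, obtain a dual graph $\widehat{H}$ that is a bounded blow-up of $H$, and invoke the theorem that first-order transductions preserve bounded twin-width to push the lower bound from $H$ to $\widehat{H}$. This works, and your route has the small aesthetic advantage that the counting is done on the familiar class of cubic graphs rather than on a class of subdivisions; on the other hand, it imports heavier machinery (the quantitative transduction theorem), and your formulation of the transduction needs two repairs: (i) ``colour each vertex of $\widehat{H}$ by the gadget it belongs to'' should read ``by its \emph{type within} the gadget''---otherwise you are using unboundedly many colours, which a transduction cannot do; and (ii) adjacency in $H$ is not a distance threshold in $\widehat{H}$ (vertices at distance two in $H$ also have bounded distance in $\widehat{H}$), so the defining formula must additionally constrain the colours along the witnessing path to pass through exactly one tube. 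Both are routine once noticed, and if you use a single $d$-simplex per vertex as the paper does (which is possible, since $3 \leq d+1$) the formula becomes ``a path of length $\leq d+1$ all of whose internal vertices lie in a tube,'' which is clean. On the PL side you gloss over the step where the fixed triangulation $\tri_W$ of the thickening (not merely the polyhedron $\|W\|$) must be realized as a subcomplex of a triangulation of $\manifold$: the paper makes this explicit by linearly embedding a barycentric subdivision of $\tri_W$, applying Armstrong's extension theorem to triangulate the complement inside a large simplex $\bigsimplex$, inserting a product collar to interpolate between $\partial\tri_W$ and its subdivision, and then swapping the resulting triangulated ball for a $d$-simplex of an arbitrary triangulation of $\manifold$. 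Your appeal to the folklore fact that a PL subpolyhedron is a subcomplex of some triangulation is in the right spirit, but as stated it only gives \emph{some} triangulation of $W$ as a subcomplex, not the one you built, so this step also needs the extension-of-boundary-triangulation argument rather than a generic citation.
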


\begin{remark}
\label{rem:graphs-on-surfaces}
The assumption of $d \geq 3$ in \Cref{thm:tww-large} is essential. Indeed, the dual graph of any triangulation of the genus-$g$ surface $\surface_g$ is, in particular, a graph that embeds into $\surface_g$ and such graphs are known to have twin-width bounded above by $c(\sqrt{g}+1)$ for some universal constant $c > 0$ \cite{kral2023tww-surfaces}.\footnote{This bound is sharp up to a constant multiplicative factor \cite{kral2023tww-surfaces}. For $g=0$, we know that planar graphs have twin-width at most eight \cite{hlineny2023tww-planar}, and there are planar graphs with twin-width equal to seven \cite{kral2023planar}.}
\end{remark}

\subparagraph*{Outline of the paper.} In \Cref{sec:prelims} we review the relevant notions from graph theory and topology. In \Cref{sec:whitney} we recall Whitney's seminal work on triangulating smooth manifolds. This is followed by a detailed proof of \Cref{thm:tww-mfd} in \Cref{sec:proof}. Finally, in \Cref{sec:large} we prove the complementary results about triangulations with dual graph of large twin-width.

\section{Preliminaries}
\label{sec:prelims}

\subparagraph*{Basic notation.} For a finite set $S$ we let $|S|$ denote its cardinality, while for a real number $x$ we let $|x|$ denote its absolute value. For a positive integer $k \leq |S|$, we let $\binom{S}{k}$ denote the set of $k$-element subsets of $S$. For a positive integer $n$, we let $[n]$ denote the set of all positive integers up to $n$, and for two real numbers $a \leq b$, we use $[a,b]$ to denote the closed interval $\{x \in \mathbb{R} : a \leq x \leq b\}$. For a vector $y = (y_1,\ldots,y_d) \in \mathbb{R}^d$ we use $\|y\|$ to denote its Euclidean norm, i.e, $\|y\|^2 = \sum_{i = 1}^d y_i^2$. However, if $\complex$ is a (cubical or simplicial) complex, then $\geomrel{\complex}$ refers to its geometric realization.

\subsection{Trigraphs, contraction sequences and twin-width}
\label{ssec:tww}

Following \cite[Sections 3 and 4]{bonnet2022twin-width-1}, in this section we review the graph-theoretic notions central to our work and collect some basic, yet important facts about twin-width.

\subparagraph*{Trigraphs.} A \emph{trigraph} $G$ is a triple $G = (V,E,R)$, where $V$ is a finite set of \emph{vertices}, and $E,R \subseteq \binom{V}{2}$ are two disjoint subsets of pairs of vertices called \emph{black edges} and \emph{red edges}, respectively. We also refer to the sets of vertices, black edges and red edges of a given trigraph $G$ as $V(G)$, $E(G)$ and $R(G)$, respectively. Any simple graph $G=(V,E)$ may be regarded as a trigraph $(V,E,R)$ with $R = \emptyset$. For a vertex $v \in V(G)$ the \emph{degree $\deg(v)$ of $v$} is the number of edges incident to it, i.e., $\deg(v) = |\{e \in E \cup R : v \in e\}|$.  Additionally, the \emph{red degree $\deg_R(v)$ of $v$} is the number of red edges incident to it, i.e., $\deg_R(v) = |\{e \in R : v \in e\}|$.
A~trigraph $G$ for which $\deg_R(G) = \max_{v \in V(G)}{\deg_R(v)} \leq b$ is called a \emph{$b$-trigraph}. Given two trigraphs $G=(V,E,R)$ and $G' = (V',E',R')$, we say that $G'$ is a \emph{subtrigraph} of $G$, if $V' \subseteq V$, $E' \subseteq E \cap \binom{V'}{2}$ and $R' \subseteq R \cap \binom{V'}{2}$.\footnote{As usual, subtrigraphs of graphs (those without any red edges) will also be called subgraphs.} In addition, if $E' = E \cap \binom{V'}{2}$ and $R' = R \cap \binom{V'}{2}$, then we say that $G'$ is an \emph{induced subtrigraph} of $G$. For a trigraph $G$ and a subset $S \subseteq V(G)$ of its vertices, $G - S$ denotes the induced subtrigraph of $G$ with vertex set $V(G) \setminus S$.

\subparagraph*{Contraction sequences and twin-width.} Let $G = (V,E,R)$ be a trigraph and $u,v \in V$ be two arbitrary distinct vertices of $G$. We say that the trigraph $G / u,v = (V',E',R')$ is obtained from $G$ by \emph{contracting} $u$ and $v$ into a new vertex $w$ if \mynum{1.}~$V' = (V \setminus \{u,v\}) \cup \{w\}$, \mynum{2.}~$G - \{u,v\} = (G/u,v) - \{w\}$ and \mynum{3.}~for any $x \in V'\setminus\{w\} = V\setminus\{u,v\}$ we have
\begin{itemize}
	\item $\{w,x\} \in E'$ if and only if $\{u,x\} \in E$ and $\{v,x\} \in E$,
	\item $\{w,x\} \notin E' \cup R'$ if and only if $\{u,x\} \notin E \cup R$ and $\{v,x\} \notin E \cup R$, and
	\item $\{w,x\} \in R'$ otherwise.
\end{itemize}

We call the trigraph $G / u,v$ a \emph{contraction} of $G$.
A sequence $\seq = (G_1,\ldots,G_m)$ of trigraphs is a \emph{contraction sequence} if $G_{i+1}$ is a contraction of $G_i$ for every $1 \leq i \leq m-1$. Note that $|V(G_{i+1})| = |V(G_i)|-1$. We use the notation ``$\seq\colon G_1 \leadsto G_m$'' to indicate that the trigraphs $G_1$ and $G_m$ are initial and terminal entries of the contraction sequence $\seq$.
The \emph{width} $\width(\seq)$ of a contraction sequence $\seq = (G_1,\ldots,G_m)$ is defined as $\width(\seq) = \max_{1 \leq i \leq m}\deg_R(G_i)$, i.e., the largest red degree of any vertex of any trigraph in $\seq$.
Now, the \emph{twin-width} $\tww(G)$ of a trigraph $G$ is defined as the smallest width of any contraction sequence $(G_1,\ldots,G_{|V(G)|})$ with $G_1 = G$ and $G_{|V(G)|}=\bullet$, where $\bullet$ denotes the trigraph consisting of a single vertex.

\paragraph*{Some properties of twin-width; grid graphs}

We conclude this section by collecting some properties of twin-width that we will rely on later. The first one states that twin-width is monotonic under taking induced subtrigraphs and is a simple consequence of the definitions (cf.\ \cite[Section 4.1]{bonnet2022twin-width-1}).

\begin{proposition}
\label{prop:induced}
If $H$ is an induced subtrigraph of a trigraph $G$, then $\tww(H) \leq \tww(G)$.
\end{proposition}

\subparagraph*{Smallness.} An infinite class $\class$ of graphs is \emph{small} if there exists a constant $c > 1$ such that for every $n \in \mathbb{N}$ the class $\class$ contains at most $n!c^n$ labeled graphs on $n$ vertices.
The next theorem says that every graph class of bounded twin-width---i.e., for which there exists a constant $C > 0$, such that $\tww(G) \leq C$ for every graph $G$ in the class---is small.

\begin{theorem}[{\cite[Theorem~2.5]{bonnet2022twin-width-2}}]
\label{thm:small}
Every graph class with bounded twin-width is small.
\end{theorem}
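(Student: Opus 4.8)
The plan is to bound, for each fixed $d$, the number of labeled $n$-vertex graphs of twin-width at most $d$ by $n!\,c^n$ for some $c=c(d)>1$; the theorem follows at once, since a class of bounded twin-width has all its members of twin-width at most some constant $d$ (no hereditariness is needed). The approach is to turn a width-$d$ contraction sequence $\seq\colon G_1 = G \leadsto G_{|V(G)|} = \bullet$ into a short bit string from which $G$ can be reconstructed, where ``short'' means $n\log n + O(dn)$ bits.

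First I would read $\seq$ backwards as a sequence of \emph{splits}: if $G_{i+1}$ arises from $G_i$ by contracting $u,v$ into $w$, then $G_i$ arises from $G_{i+1}$ by splitting $w$ into $u$ and $v$. The point where the hypothesis enters is that a split is cheap to describe. By the contraction rule, every black neighbour of $w$ is a black neighbour of both $u$ and $v$, and every non-neighbour of $w$ is a non-neighbour of both; the only degrees of freedom are the status of the pair $\{u,v\}$ and, for each \emph{red} neighbour $x$ of $w$, the joint status of $\{u,x\}$ and $\{v,x\}$. Since $\deg_R(w)\le d$ in $G_{i+1}$, each split is described by $O(d)$ bits, hence $O(dn)$ bits suffice for all the ``local'' edge data of the whole sequence. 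This first ingredient is essentially forced once one makes the observation about bounded red degree.

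Second, one must record, for each split, \emph{which} vertex is split and which fresh vertex appears — and this is the main obstacle, because spending $\log n$ bits twice per step costs $2n\log n$ bits and only yields an $(n!)^2 c^n$ bound. To stay within a single $n\log n$ term I would label every vertex of every $G_i$ by the least element of the block of $[n]$ it represents (so that $G_1=G$ carries the labels $1,\dots,n$). Then in the forward direction exactly one label is destroyed at each contraction, the destroyed labels are pairwise distinct and exhaust $\{2,\dots,n\}$, so the sequence of destroyed labels is a permutation of $\{2,\dots,n\}$, recordable in $\log((n-1)!)\le n\log n$ bits. The delicate part — the technical heart of the argument — is to check that this permutation, together with the $O(dn)$ bits of local data and the labeling convention, unambiguously pins down the labeled trigraph $G_i$ at every stage when the splits are replayed in reverse (one recovers only the labeled adjacency at each stage, not the underlying partition of $[n]$, and care is needed to locate the split vertex consistently with the convention). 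Granting this, the number of labeled $n$-vertex graphs of twin-width at most $d$ is at most $2^{n\log n + O(dn)} = n^n\cdot 2^{O(dn)} \le n!\,e^n\,2^{O(dn)} = n!\,c^n$ by Stirling's bound $n^n\le n!\,e^n$, with $c=c(d)$, which is exactly smallness.
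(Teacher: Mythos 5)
You correctly identify the crux: a naive encoding spends $\log n$ bits per step for \emph{both} the fresh label and the split vertex, which costs about $2n\log n \approx \log\bigl((n!)^2\bigr)$ and overshoots the budget. But the min-label bookkeeping you propose does not actually close this gap. When the splits are replayed in reverse, the permutation only names the \emph{new} label $\pi(i)$ appearing at step $i$; it constrains the split vertex to carry a label smaller than $\pi(i)$ but does not single it out. Your local data (the status of $\{u,v\}$ and the joint statuses at the red neighbours of the merged vertex) cannot disambiguate either, since those red neighbours are themselves unnamed in the encoding. Already at $n=3$ the map fails to be injective: the labeled paths with edge sets $\{\{1,2\},\{2,3\}\}$ and $\{\{1,2\},\{1,3\}\}$ each admit a width-$0$ contraction sequence (merge the two twin leaves, then merge with the centre), and both produce the destroyed-label sequence $(3,2)$ together with identical local data (non-edge, then black edge, with no red neighbours at either step), so your code does not determine the labeled graph.

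Patching this by explicitly recording the split vertex costs up to $\log i$ bits at step $i$, whose sum is another $\log\bigl((n-1)!\bigr)$, landing you back at $\bigl((n-1)!\bigr)^2\,2^{O(dn)}$ codewords, which is not of the form $n!\,c^n$. Avoiding this second permutation factor is the real content of the cited theorem, and the labeling convention alone does not accomplish it; the ``delicate part'' you flag is not a technicality to be verified but the place where a genuinely new idea is required.
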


Now let $s$ be a non-negative integer.
The \emph{$s$-subdivision} of $G$ is the graph $\sd_s(G)$ obtained from $G$ by subdividing each edge in $E(G)$ exactly $s$ times.
A simple counting argument together with \Cref{thm:small} yields the following:

\begin{proposition}
\label{prop:k-reg-s-sub}
For any fixed integers $k \geq 4$ and $s \geq 0$, the class $\sd_s(\class_{k})$ of $s$-subdivisions of $k$-regular\footnote{A simple graph $G=(V,E)$ is \emph{$k$-regular} if every vertex $v \in V$ has degree $\deg(v)=k$.} simple graphs is not small, hence has unbounded twin-width.
\end{proposition}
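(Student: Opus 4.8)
The plan is to show that $\sd_s(\class_k)$ is \emph{not} small; by the contrapositive of \Cref{thm:small} this forces the twin-width of the class to be unbounded. Concretely, I will exhibit, for every $c > 1$, arbitrarily large $n$ for which $\sd_s(\class_k)$ contains more than $n!\,c^n$ labeled graphs on $n$ vertices. Write $R_k(N)$ for the number of labeled $k$-regular simple graphs on $[N]$. Since such a graph has $kN/2$ edges, its $s$-subdivision has exactly $n \defeq N + s\cdot\tfrac{kN}{2} = \lambda N$ vertices, where $\lambda \defeq 1 + sk/2$ is a fixed constant; I will only use even values of $N$.

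The combinatorial core is an injectivity argument producing many \emph{labeled} members of the class. Given a labeled $k$-regular graph $G$ on $[N]$ and a bijection $\beta$ from the $n-N$ subdivision vertices of $\sd_s(G)$ onto $\{N+1,\dots,n\}$, let $\Phi(G,\beta)$ be the labeled graph on $[n]$ obtained from $\sd_s(G)$ by keeping the labels $1,\dots,N$ on the original vertices and using $\beta$ on the new ones. From $H \defeq \Phi(G,\beta)$ one recovers the pair $(G,\beta)$: by construction the original vertices are exactly $1,\dots,N$, and for $u \neq v$ in $[N]$ one has $uv \in E(G)$ iff $H$ has a $u$--$v$ path all of whose internal vertices lie in $\{N+1,\dots,n\}$ (for $s = 0$ this just says $uv \in E(H)$; for $s \geq 1$, following such a path out of $u$ is forced to traverse a single subdivided edge, so that edge must be $uv$), after which $\beta$ is read off those paths. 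Hence $\Phi$ is injective, so $\sd_s(\class_k)$ contains at least $R_k(N)\cdot(n-N)!$ labeled graphs on $n$ vertices.

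It remains to plug in a lower bound on $R_k(N)$. By classical results on the enumeration of regular graphs --- through the configuration model (a random $k$-regular configuration is simple with probability bounded away from $0$), or elementarily by bounding $R_k(N)$ below by the number of $k$-regular bipartite graphs on $[N/2] \sqcup [N/2]$ --- one has $R_k(N) \geq c_k^{\,N} N^{kN/2}$ for some constant $c_k > 0$ and all large even $N$. Using $\tfrac{n!}{(n-N)!} \leq n^N = (\lambda N)^N$, the number of labeled graphs on $n = \lambda N$ vertices in $\sd_s(\class_k)$ is at least
\[
R_k(N)\,(n-N)! \;=\; \frac{R_k(N)}{\,n!/(n-N)!\,}\cdot n! \;\geq\; \frac{c_k^{\,N} N^{kN/2}}{(\lambda N)^N}\cdot n! \;=\; n!\cdot\bigl(c_k/\lambda\bigr)^{N} N^{(k/2-1)N}.
\]
Since $k \geq 4$ gives $k/2 - 1 \geq 1 > 0$, the factor $\bigl(c_k/\lambda\bigr)^N N^{(k/2-1)N}$ exceeds $c^{\lambda N} = c^n$ as soon as $N^{k/2-1} > \lambda c^{\lambda}/c_k$, that is, for all large $N$. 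So for every $c$ the class beats the bound $n!\,c^n$ at infinitely many $n$, hence is not small, and by \Cref{thm:small} it has unbounded twin-width.

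I expect the delicate point to be the factor $(n-N)!$ in the second paragraph, which is genuinely needed: merely noting that $\sd_s$ is injective on graphs yields only $R_k(N)$ labeled members on $n$ vertices, and for $s \geq 1$ this is about $n^{kn/(2\lambda)} \ll n!$, far too few to defeat smallness. What saves the estimate is that while $\sd_s(\class_k)$ has comparatively few \emph{isomorphism types} on $n$ vertices, each of them carries roughly $n!$ labelings: almost all vertices of $\sd_s(G)$ have degree $2$ and may be permuted freely along the subdivided edges --- formally $|\mathrm{Aut}(\sd_s(G))| = |\mathrm{Aut}(G)|$ when $k \geq 3$, which is small for typical $G$ --- and this is exactly what the $n!$ normalization in the definition of smallness responds to. The remaining ingredients (existence of $k$-regular graphs on the relevant even vertex counts, and the Stirling-type estimate behind the bound on $R_k(N)$) are routine.
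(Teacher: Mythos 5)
Your proof is correct and follows essentially the same route as the paper's: invoke \Cref{thm:small}, lower-bound the number of labeled $n$-vertex members of $\sd_s(\class_k)$ by (labeled $k$-regular graphs on the degree-$k$ vertices) $\times$ (labelings of subdivision vertices), feed in the Bender--Canfield/Bollob\'as asymptotics for $R_k$, and use $k\geq 4 \Rightarrow k/2-1\geq 1$ to beat $n!\,c^n$. The only cosmetic difference is that the paper also sums over the $\binom{n}{m}$ choices of which vertices have degree $k$, writing the count as $n!\cdot N_k(m)/m!$, whereas you fix the degree-$k$ vertices to be $\{1,\dots,N\}$ and absorb the loss via $n!/(n-N)!\leq n^N$; both yield the needed super-exponential margin, and your explicit injectivity argument for $\Phi(G,\beta)$ and the closing remark on why the $(n-N)!$ factor is essential are accurate.
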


This proposition follows from the adaptation of an argument given in the first paragraph of \cite[Section 3]{dvorak2010small}. For completeness, we spell out this proof below.

\begin{proof}[Proof of \Cref{prop:k-reg-s-sub}] Let $N_k(m)$ be the number of labeled $k$-regular simple graphs on $m$~vertices. Note that if $N_k(m) > 0$, then $km$ is even; which we now assume. It is known (cf.\ \cite[Section\ 6.4.1]{noy2015graphs}) that, asymptotically
\begin{align}
	N_k(m) \sim \exp\left(\frac{1-k^2}{4}\right) \frac{(km)!}{(km/2)! \cdot 2^{km/2} \cdot (k!)^m} = \Omega\left(\frac{(km/2)!}{(k!)^m}\right).
	\label{eq:k-reg}
\end{align}
Further, let $N_k^{(s)}(n)$ be the number of $n$-vertex graphs in the class $\sd_s(\class_{k})$ of $s$-subdivisions of $k$-regular simple graphs. If a graph $G \in \sd_s(\class_{k})$ has $n$ vertices, then $n = m + \frac{km}{2}s$, where $m$ is the number of vertices of $G$ of degree $k$. Note that such a graph $G$ can be obtained by first choosing a $k$-regular labeled graph $H$ on $m$ vertices, then ordering the remaining $n-m = kms/2$ vertices arbitrarily and evenly distributing them to the edges of $H$ according to some fixed ordering of $E(H)$. It follows that
\begin{align}
	N_k^{(s)}(n) \geq \binom{n}{m}\cdot N_k(m) \cdot (n-m)! = n! \frac{N_k(m)}{m!} \overset{\eqref{eq:k-reg}}{=}  n! \cdot \Omega\left(\frac{(km/2)!}{m!\cdot(k!)^m}\right).
	\label{eq:s-k-reg}
\end{align}
Since $k \geq 4$, we have $km/2 \geq 2m$. This, together with $(2m)! \geq 2^m(m!)^2$ and \eqref{eq:s-k-reg} implies
\begin{align}
	N_k^{(s)}(n) \geq n! \cdot \Omega\left(\frac{m!}{(2 \cdot k!)^m}\right).
\end{align}
Recall that $m = 2n/(2+ks)$. In particular, $m$ is proportional to $n$. Hence $m! / (2 \cdot k!)^m$ grows faster than $c^n$ for any fixed constant $c > 1$. Thus the graph class $\sd_s(\class_{k})$ is not small.
\end{proof}

\subparagraph*{Grid graphs.} The \emph{$d$-dimensional $n$-grid} $P_n^d$ is the graph with vertex set $V(P_n^d) = [n]^d$, and $\{u,v\} \in E(P_n^d)$ for two vertices $u=(u_1, \ldots, u_d)$ and $v = (v_1,\ldots,v_d)$ if and only if $\sum_{i=1}^d |u_i - v_i| = 1$.
The next result states that $\tww(P_n^d) \leq 3d$ irrespective of the value of $n$.

\begin{theorem}[Theorem 4 in \cite{bonnet2022twin-width-1}]
\label{thm:tww-grid}
For every positive $d$ and $n$, the $d$-dimensional $n$-grid $P_n^d$ has twin-width at most $3d$.
\end{theorem}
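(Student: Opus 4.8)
The plan is to prove, by induction on $d$, the following slightly more general statement: for every $n$ and every set $X\subseteq E(P_n^d)$, the trigraph $T$ obtained from $P_n^d$ by colouring the edges of $X$ red (and the rest black) satisfies $\tww(T)\le 3d$. The strengthening is essentially forced on us, since collapsing a part of a grid unavoidably creates red edges, so the recursion could not even start from an all‑black grid. The base case $d=1$ is a path with some edges red; contracting it from one endpoint merges an ever‑growing prefix, whose only neighbour is the next vertex, so the red degree never exceeds $2\le 3$.

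For the inductive step I would reduce the dimension by one by \emph{eating one slab at a time} along the first coordinate. Write $S_i=\{x_1=i\}\cong P_n^{d-1}$ for the $i$‑th slab, and for $k\ge 1$ let $T_k$ be the trigraph in which the fibres $\{(\,\cdot\,,\mathbf a):\mathbf a\in[n]^{d-1}\}$ of the slabs $S_1,\dots,S_k$ have each been contracted to a single ``thick column''. I claim every $T_k$ has the following shape: the contracted thick columns form a copy $R_k$ of $P_n^{d-1}$ whose grid edges are all red, this $R_k$ is joined by a red perfect matching (thick column $\mathbf a$ to the vertex $(k+1,\mathbf a)\in S_{k+1}$) to the untouched block $S_{k+1}\cup\dots\cup S_n$, and that block is just $P_n^{d-1}$ stacked $n-k$ times with its original colours (for $k=1$ the copy $R_1$ and the matching instead carry the colours inherited from $X$). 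The reason grid edges of $R_k$ and the matching come out \emph{red} is that in $P_n^d$ two parallel fibres are joined by a matching, never by a complete bipartite graph, so a contraction collapsing both fibres sees only a partial adjacency. Passing from $T_k$ to $T_{k+1}$ then means contracting the $n^{d-1}$ matched pairs $\{\text{thick column }\mathbf a,(k+1,\mathbf a)\}$; once $k=n$ we are left with $T_n=R_n$, a copy of $P_n^{d-1}$ with some red edge set, on which we invoke the induction hypothesis (a contraction sequence of width $\le 3(d-1)\le 3d$).

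The point where care is needed is the \emph{order} in which the matched pairs of a single slab‑merge are contracted; I would use the lexicographic order on the fibres $\mathbf a\in[n]^{d-1}$. With that order the key estimate is the following. When the pair over $\mathbf a$ is contracted into a new vertex $w_{\mathbf a}$, its red neighbours are exactly: for each already‑processed grid‑neighbour $\mathbf a'$ of $\mathbf a$, the \emph{single} vertex $w_{\mathbf a'}$ (the two red edges one would naively get, to ``thick column $\mathbf a'$'' and to $(k+1,\mathbf a')$, have already fused into the one edge $w_{\mathbf a}w_{\mathbf a'}$); for each not‑yet‑processed grid‑neighbour $\mathbf a''$, the \emph{two} vertices ``thick column $\mathbf a''$'' and $(k+1,\mathbf a'')$; and the single vertex $(k+2,\mathbf a)$ when it exists. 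In lexicographic order a vertex of $[n]^{d-1}$ has at most $d-1$ earlier and at most $d-1$ later grid‑neighbours, so $\deg_R(w_{\mathbf a})\le (d-1)+2(d-1)+1=3d-2\le 3d$. A routine check — which I would include but not dwell on — shows that the red degree of every \emph{other} vertex touched by the contraction (a previously created $w_{\mathbf a'}$, an as‑yet‑unmerged thick column, a vertex $(k+1,\cdot)$ or $(k+2,\cdot)$ of the remaining block) never exceeds $2d$, since such a vertex only ever carries red edges to its $\le 2d$ grid‑neighbours plus a bounded number along the $x_1$‑axis; maintaining the ``shape of $T_k$'' invariant is likewise mechanical.

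I expect the lexicographic ordering to be the only genuinely load‑bearing idea. With an arbitrary contraction order a freshly formed $w_{\mathbf a}$ could have \emph{all} of its $2(d-1)$ grid‑neighbours still unprocessed, and then the ``two red edges per unprocessed neighbour'' accounting yields only $\deg_R(w_{\mathbf a})\le 4(d-1)+1$, which already exceeds $3d$ for $d\ge 4$; the lexicographic order is exactly what guarantees that at most half of the neighbours fall into the expensive ``unprocessed'' class. Everything else — the base case, the shape invariant, and the cheap‑vertex bookkeeping — is routine.
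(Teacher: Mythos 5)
Your proof is correct. The paper does not actually prove this theorem — it cites \cite{bonnet2022twin-width-1} — but the argument you give is the standard one from that reference: strengthen the statement to allow an arbitrary set of edges to be pre-colored red, induct on $d$, contract slab by slab along the first coordinate, and use a lexicographic sweep over $[n]^{d-1}$ to ensure that when a pair is merged, at most $d-1$ of its $2(d-1)$ lateral grid-neighbours are still in the ``expensive'' unprocessed state. The resulting estimate $(d-1)+2(d-1)+1=3d-2$ for the freshly created vertex is exactly right, the shape invariant is maintained, and the base case $R_n\cong P_n^{d-1}$ (with a red edge set, which is why the strengthening is needed) is handled by the induction hypothesis of width $3(d-1)$.

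One small bookkeeping imprecision: you claim that every other vertex touched by a contraction has red degree at most $2d$. This is not quite right for a previously created $w_{\mathbf a'}$, which at the moment of its creation can have red degree up to $3d-2>2d$. What actually holds — and what you need — is that subsequent contractions only \emph{decrease} $\deg_R(w_{\mathbf a'})$ (two red edges to the pair over $\mathbf a$ merge into a single red edge to $w_{\mathbf a}$), so the running maximum remains $3d-2\le 3d$. The $\le 2d$ bound is fine for the untouched thick columns and for the vertices $(k+1,\cdot)$ and $(k+2,\cdot)$, where your ``grid-neighbours plus a bounded number along $x_1$'' accounting does give $\le 2d$. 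None of this affects the conclusion.
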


The \emph{$d$-dimensional $n$-grid with diagonals} $\griddiag_{n,d}$ is the graph with $V(\griddiag_{n,d})=[n]^d$, and $\{u,v\} \in E(\griddiag_{n,d})$ for two vertices $u=(u_1, \ldots, u_d)$ and $v = (v_1,\ldots,v_d)$ if and only if $\max_{i=1}^d |u_i - v_i| \leq 1$. Now, for a given trigraph  $G = (V,E,R)$ we set $\red(G) = (V,\emptyset,E \cup R)$. In words, $\red(G)$ is the trigraph obtained from $G$ by replacing every black edge of $G$ by a red edge between the same vertices. With this notation $\red(\griddiag_{n,d})$ is the \emph{$d$-dimensional \textbf{red} $n$-grid with diagonals}, i.e., $\red(\griddiag_{n,d}) = ([n]^d, \emptyset, E(\griddiag_{n,d}))$. Clearly, $\tww(\griddiag_{n,d}) \leq \tww(\red(\griddiag_{n,d}))$.

\begin{theorem}[Lemma 4.4 in \cite{bonnet2022twin-width-1}]
\label{thm:tww-grid-diag}
For every positive $d$ and $n$, every subtrigraph of the $d$-dimensional red $n$-grid with diagonals $\red(\griddiag_{n,d})$ has twin-width at most $2(3^d-1)$.
\end{theorem}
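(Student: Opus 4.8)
The plan is to produce a single contraction sequence for the \emph{full} red grid with diagonals $G \defeq \red(\griddiag_{n,d})$ whose width is $O(3^d)$, and then observe that the very same sequence works, with no larger width, for every subtrigraph. The transfer is the easy part: $G$ has no black edges, so the contraction rule never produces black edges, and after any number of contractions two parts of the current partition of $[n]^d$ are red-adjacent exactly when some pair of original vertices, one in each part, is red-adjacent in $G$. Hence replaying the same merges on a \emph{spanning} subtrigraph $G' \subseteq G$ can only produce a subset of the red edges obtained on $G$, so all red degrees are at most as large; and an arbitrary subtrigraph is an induced subtrigraph of a spanning one, so \Cref{prop:induced} handles the general case. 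It therefore suffices to bound $\tww(\red(\griddiag_{n,d}))$.

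For that I would induct on $d$. For $d = 1$ the graph is a red path, which can be contracted from one endpoint with red degree never exceeding $2$. For the inductive step, slice $[n]^d$ into the $n$ consecutive layers $L_j \defeq [n]^{d-1} \times \{j\}$; each $L_j$ carries a copy of $\red(\griddiag_{n,d-1})$, and a part contained in $L_j$ can be red-adjacent only to parts contained in $L_{j-1}$, $L_j$, or $L_{j+1}$. Taking by induction a contraction sequence $\seq_{d-1}$ of width $w_{d-1}$ for $\red(\griddiag_{n,d-1})$, viewed as a chain of partitions $\mathcal{P}_1, \dots, \mathcal{P}_m$ of $[n]^{d-1}$, I would first run $\seq_{d-1}$ \emph{in lock-step across all layers} --- performing the $i$-th merge of $\seq_{d-1}$ first inside $L_1$, then inside $L_2$, \dots, then inside $L_n$, and only then moving on to the next merge --- which collapses each layer to a single vertex and leaves behind the red path $L_1 - L_2 - \dots - L_n$; then I would contract that path from one endpoint.

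The heart of the argument is bounding the red degree during the lock-step phase. At a generic intermediate moment, layers $L_1, \dots, L_{j-1}$ sit at partition $\mathcal{P}_{i+1}$ and layers $L_j, \dots, L_n$ at $\mathcal{P}_i$, where $\mathcal{P}_{i+1}$ is the coarsening of $\mathcal{P}_i$ by one merge. A current part $C = Q \times \{j'\}$ acquires red neighbours of three types: those inside its own layer (at most $w_{d-1}$, directly from $\seq_{d-1}$), those in layer $L_{j'-1}$, and those in layer $L_{j'+1}$. The key observation controlling the last two is that, for any partition $\mathcal{P}$ of $[n]^{d-1}$, the parts lying within $\ell_\infty$-distance $1$ of the point set $Q$ are exactly $Q$ itself (if $Q \in \mathcal{P}$) together with the parts red-adjacent to $Q$ in $\red(\griddiag_{n,d-1})$, so there are at most $1 + w_{d-1}$ of them; moreover this count can only drop when $\mathcal{P}$ is coarsened, while it can grow by at most $1$ when $\mathcal{P}$ is refined by one split --- and such a refinement is seen, from $C$'s viewpoint, only at the single interface layer where $\mathcal{P}_i$ meets $\mathcal{P}_{i+1}$. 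Putting the three contributions together gives a recursion of the form $w_d \le 3 w_{d-1} + O(1)$ with $w_1 \le 2$, hence $w_d = O(3^d)$; with the bookkeeping done carefully (the refinement can cost extra on only one of the two neighbouring layers) one recovers the stated bound $w_d \le 2(3^d - 1)$.

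I expect the main obstacle to be recognizing that lock-step synchronisation is precisely what is needed. The naive idea of collapsing one layer completely and then moving to the next fails dramatically: once a layer has been contracted down to one large blob, that blob is within $\ell_\infty$-distance $1$ of essentially the entire neighbouring layer, so its red degree is $\Theta(n^{d-1})$. Contracting all layers simultaneously instead keeps the ``footprint'' of every part small and aligned across consecutive layers, and the only remaining delicate point is the accounting at the one interface layer where the two adjacent layer-partitions disagree.
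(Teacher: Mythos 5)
This theorem is not proved in the present paper; it is imported verbatim as Lemma~4.4 of \cite{bonnet2022twin-width-1}, so there is no ``paper's own proof'' to compare against here. Your argument is, as far as I can tell, a correct and self-contained reconstruction, and it matches the spirit of the original: the contraction sequence is partition-based, it proceeds by slicing along one coordinate and replaying the lower-dimensional sequence synchronously across slices, and the $3^d$ count ultimately comes from $\ell_\infty$-balls of radius one. Two remarks. First, the reduction to the full grid is correct and clean: since $\red(\griddiag_{n,d})$ has no black edges, the red-adjacency of two parts at any stage depends only on whether the parts contain an originally red-adjacent pair, so replaying the same merges on a spanning subtrigraph yields a subgraph of the red graph at every stage; passing to a spanning subtrigraph and then invoking \cref{prop:induced} handles arbitrary subtrigraphs. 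Second, your bookkeeping actually gives $w_d \leq 3w_{d-1} + 3$ with $w_1 \leq 2$ (own layer $\leq w_{d-1}$; one neighbouring layer at the same or coarser partition $\leq 1+w_{d-1}$; at most one neighbouring layer at the one-split-finer partition $\leq 2+w_{d-1}$), which is strictly below $2(3^d-1)$, so the stated bound is certainly met. The one place I would ask you to be fully explicit in a written version is the sentence ``the parts lying within $\ell_\infty$-distance $1$ of $Q$ are exactly $Q$ itself together with the parts red-adjacent to $Q$'': this is correct precisely because $Q$ is a union of parts of the partition in question and hence every part within distance one of $Q$ either equals $Q$ or contributes a red edge to $Q$ in the contracted trigraph --- worth spelling out so the reader sees why the within-layer red-degree bound $w_{d-1}$ transfers to the cross-layer count.
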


\subsection{Background in topology}
\label{ssec:topology}

For general background in (combinatorial and differential) topology we refer to \cite{prasolov2006elements}.

\subsubsection{Simplicial and cubical complexes}
\label{ssec:complexes}

\subparagraph*{Abstract simplicial complexes.} Given a finite ground set $\set$, an \emph{abstract simplicial complex} (or \emph{simplicial complex}, for short) $\complex$ over $\set$ is a downward closed subset  of the power set $2^\set$, i.e., $\mathcal{F} \in \complex$ and $\mathcal{F}' \subset \mathcal{F}$ imply $\mathcal{F}' \in \complex$.
Any element of $\complex$ is called a \emph{face} or \emph{simplex} of $\complex$, and for $\sigma \in \complex$ the \emph{dimension of $\sigma$} is defined as $\dim\sigma = |\sigma| - 1$.
The \emph{dimension of $\complex$}, denoted by $\dim\complex$, is then the maximum dimension of a~face of~$\complex$.
If $\dim\complex = d$, we also say that $\complex$ is a~\emph{simplicial $d$-complex}.
For $0\leq i \leq \dim\complex$, we let $\complex(i) = \{\sigma \in \complex : \dim\sigma = i\}$ denote the set of $i$-dimensional faces (or $i$-faces, or $i$-simplices) of $\complex$.
The \emph{$i$-skeleton} $\complex_i = \bigcup_{j=0}^i \complex(j)$ of $\complex$ is the union of all faces of $\complex$ up to dimension $i$.  Note that any simple graph $G = (V,E)$ can be regarded as a $1$-dimensional simplicial complex $\complex_G$ with $\complex_G(0) = \{\{v\} : v \in V\}$ and $\complex_G(1) = E$. For $0 \leq i \leq 3$ the $i$-simplices of a simplicial complex $\complex$ are respectively called the \emph{vertices}, \emph{edges}, \emph{triangles}, and \emph{tetrahedra} of $\complex$.

\subparagraph*{Geometric realization of simplicial complexes.} Every abstract simplicial complex $\complex$ may be realized geometrically as follows. To each abstract $i$-simplex $\sigma = \{v_{0},v_{1},\ldots,v_{i}\} \in \complex$ we associate a \emph{geometric $i$-simplex} $\|\sigma\| = [v_{0},v_{1},\ldots,v_{i}] \subset \mathbb{R}^i$ defined as the convex hull of $i+1$ affinely independent points in $\mathbb{R}^i$. We equip $\|\sigma\|$ with the subspace topology inherited from~$\mathbb{R}^i$. Next, we consider the disjoint union $\bigsqcup_{\sigma \in \complex}\|\sigma\|$ of these geometric simplices, and perform identifications along their faces that reflect their relationship in $\complex$. The resulting space $\|\complex\|$, equipped with the quotient topology, is called the \emph{geometric realization of $\complex$}, see \Cref{fig:geometric-example}. The geometric realization of a simplicial complex is unique up to homeomorphism.

\begin{theorem}[folklore; cf.\ {\cite[Theorem 3.15]{prasolov2006elements}}]
	Let $\|\complex\|$ be the geometric realization of a~$d$-dimensional simplicial complex $\complex$. Then there exists an embedding (i.e.,\ a continuous, injective map) $f\colon \|\complex\| \rightarrow \mathbb{R}^{2d+1}$. Furthermore, $f$ can be chosen to be simplex-wise linear.
	\label{thm:geometric-realization}
\end{theorem}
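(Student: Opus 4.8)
The plan is to build $f$ by placing the vertices of $\complex$ in \emph{general position} in $\mathbb{R}^{2d+1}$ and extending affinely across each face. First I would enumerate the vertices $v_1,\dots,v_N$ of $\complex$, fix distinct reals $t_1 < \dots < t_N$, and set $p_j \defeq (t_j,t_j^2,\dots,t_j^{2d+1}) \in \mathbb{R}^{2d+1}$, i.e.\ place the images on the moment curve. A Vandermonde determinant argument then shows that any $2d+2$ of the points $p_1,\dots,p_N$ are affinely independent. For an abstract $i$-face $\sigma = \{v_{j_0},\dots,v_{j_i}\} \in \complex$ we have $i+1 \leq d+1 \leq 2d+2$, so $p_{j_0},\dots,p_{j_i}$ are affinely independent and their convex hull $[p_{j_0},\dots,p_{j_i}]$ is a genuine geometric $i$-simplex; define $f$ on $\|\sigma\|$ to be the unique affine map carrying each vertex $v_{j_\ell}$ to $p_{j_\ell}$. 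These affine maps agree on shared faces, so they glue to a continuous map $f\colon \|\complex\| \to \mathbb{R}^{2d+1}$, which is simplex-wise linear by construction.

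The substantive step is injectivity of $f$, and this is where the dimension $2d+1$ enters. Each point $x \in \|\complex\|$ lies in the relative interior of a unique face, its \emph{carrier} $\sigma_x$; thus $f(x)$ is a convex combination of $\{p_v : v \in \sigma_x\}$ in which every coefficient is strictly positive. Now suppose $f(x) = f(y)$ and set $\sigma = \sigma_x$, $\tau = \sigma_y$. The vertex set $\sigma \cup \tau$ has at most $2(d+1) = 2d+2$ elements, so $\{p_v : v \in \sigma \cup \tau\}$ is affinely independent, and hence every point of its affine span is an affine combination of these points in exactly one way. Comparing the expression of $f(x) = f(y)$ coming from $\sigma$ with the one coming from $\tau$: any $v \in \sigma \setminus \tau$ would receive a positive coefficient on one side and coefficient $0$ on the other, a contradiction. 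Therefore $\sigma = \tau$, the two coefficient vectors coincide, and $x = y$.

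To finish, since the ground set of $\complex$ is finite, $\|\complex\|$ is compact; a continuous injection from a compact space into a Hausdorff space is a homeomorphism onto its image, so $f$ is in fact a topological embedding. I do not expect a real obstacle here, this being the classical argument; the only point requiring care is the bound $|\sigma \cup \tau| \leq 2d+2$, which is exactly why $\mathbb{R}^{2d+1}$ (and not a lower-dimensional space) is the target. One could equivalently replace the explicit moment-curve construction by the remark that a \emph{generic} placement of the vertices works, since the configurations that fail the affine-independence condition form a proper algebraic subset of $(\mathbb{R}^{2d+1})^N$.
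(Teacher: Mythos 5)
Your proof is correct. The paper states this result as folklore with a citation to Prasolov's textbook and does not supply a proof of its own; the moment-curve/general-position argument you give is the standard classical one, and you correctly isolate the key point---that $|\sigma_x \cup \sigma_y| \leq 2(d+1) = 2d+2$ together with affine independence forces the carriers to coincide and hence $x = y$. The remark at the end that a generic vertex placement works (the ``bad'' configurations form a proper algebraic subset of $(\mathbb{R}^{2d+1})^N$) is also the usual alternative phrasing, so nothing is missing.
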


\begin{remark}
\Cref{thm:geometric-realization} generalizes the well-known fact that every graph admits a~straight-line embedding in $\mathbb{R}^3.$
\end{remark}

\subparagraph*{Cubical complexes.} Analogous to simplicial complexes, a \emph{cubical complex} $\complex$ over a ground set $\set$ is a set system $\complex \subset 2^\set$ that consists of ``cubes'' instead of simplices. The terminology is the same as in the simplicial case. The only difference is that a \emph{geometric $i$-cube} is a~topological space homeomorphic to $[0,1]^i$, where $[0,1]$ denotes the closed unit interval.

Cubical or simplicial complexes in this paper will typically be defined geometrically, and as such they will naturally come with a geometric realization.

\begin{figure}[htbp]

\begin{subfigure}{0.4\textwidth}
    \centering
    \includegraphics{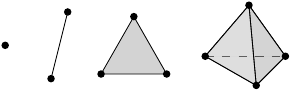}
    \bigskip
    \caption{Geometric $i$-simplices ($i=0,1,2,3$).}
    \label{fig:simplices}
\end{subfigure}%
\hfill%
\begin{subfigure}{0.5\textwidth}
    \centering
    \includegraphics{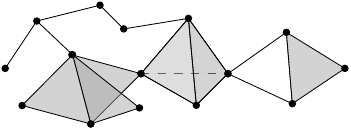}
    \smallskip
    \caption{Geometric realization of a simplicial 3-complex.}
    \label{fig:simplicial-complex}
\end{subfigure}%

\medskip

\begin{subfigure}{0.4\textwidth}
    \centering
    \includegraphics{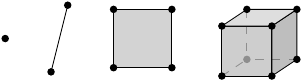}
    \bigskip
    \caption{Geometric $i$-cubes ($i=0,1,2,3$).}
    \label{fig:cubes}
\end{subfigure}%
\hfill%
\begin{subfigure}{0.5\textwidth}
    \centering
    \includegraphics{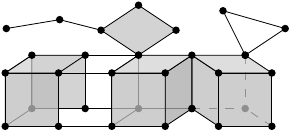}
    \smallskip
    \caption{Geometric realization of a cubical 3-complex.}
    \label{fig:cubical-complex}
\end{subfigure}%

\caption{The geometric perspective on simplicial and cubical complexes.}
\label{fig:geometric-example}
\end{figure}

\subparagraph*{The hypercubic honeycomb.} Let $n$ and $d$ be positive integers and consider the $d$-di\-men\-sional cube $[1,n]^d \subset \mathbb{R}^d$. The \emph{$d$-di\-men\-sional hypercubic honeycomb} $\hcubic^{d,n}$ is a cubical $d$-complex that decomposes $[1,n]^d$ into $(n-1)^d$ geometric cubes. The properties of this familiar object play an important role in this work, so we describe it for completeness. We define $\hcubic^{d,n}$ geometrically, in a bottom-up fashion.
First, the vertex set $\hcubic^{d,n}(0)$ consists of precisely those points in $[1,n]^d$, which have only integral coordinates. Next, a $1$-cube (i.e., an edge) is attached along its endpoints to vertices $u=(u_1, \ldots, u_d)$ and $v = (v_1,\ldots,v_d)$ in $\hcubic^{d,n}(0)$ if and only if $\sum_{i=1}^d |u_i - v_i| = 1$. Thus the 1-skeleton $\hcubic^{d,n}_1$ is just the $d$-dimensional grid graph $P^d_n$ encountered in the end of \Cref{ssec:tww}. Finally, the higher dimensional skeleta of $\hcubic^{d,n}$ are \emph{induced} by its 1-skeleton: for each subcomplex $\mathcal{Y} \subset \hcubic^{d,n}_i$ isomorphic to the boundary of a $(i+1)$-cube, we attach an $(i+1)$-cube to $\hcubic^{d,n}_i$ along $\mathcal{Y}$. In words, starting from the  1-skeleton $\hcubic^{d,n}_1$, whenever we have the possibility to attach a cube of dimension at least two (because its boundary is already present), we attach it. See, e.g., \Cref{fig:induced_complex}.

\begin{figure}[htbp]
	\centering
	
	\smallskip
	\includegraphics{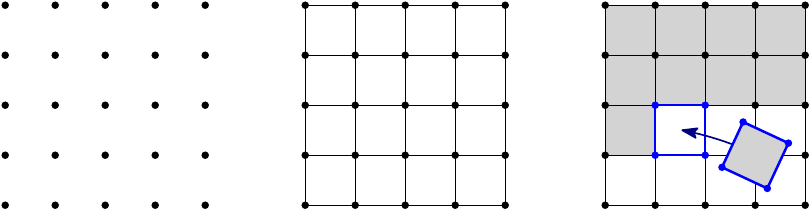}
	
	\smallskip
	
	\caption{Constructing the complex $\hcubic^{2,5}$. Its 1-skeleton $\hcubic^{2,5}_1$ is isomorphic to the grid $P_5^2$.}
	\label{fig:induced_complex}
\end{figure}

\subparagraph*{Pure complexes and their dual graphs.} A (cubical or simplicial) complex $\complex$ is \emph{pure} if every face of $\complex$ is contained in a face of dimension $\dim\complex$. It follows that, for every $i$ with $0 \leq i \leq \dim\complex$, the $i$-skeleton $\complex_i$ of a pure complex $\complex$ is also pure. Examples of pure complexes include nonempty graphs without isolated vertices, or triangulations of manifolds (see \Cref{ssec:manifolds}).
Given a pure complex $\complex$, the \emph{dual graph} $\dual(\complex_i) = (V,E)$ of its $i$-skeleton is defined as the graph, where the vertex set $V$ corresponds to the set $\complex(i)$ of $i$-faces, and $\{\sigma,\tau\} \in E$ if and only if $\sigma$ and $\tau$ share an $(i-1)$-dimensional face in $\complex$, see \Cref{fig:trg-fan-dual}. 

\begin{figure}[ht]
\centering
\begin{subfigure}{0.4\textwidth}
    \centering
    \includegraphics[scale=\smallscale]{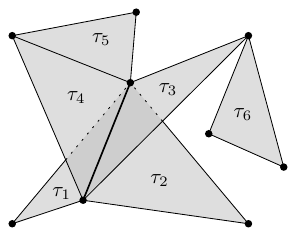}
    \caption{A pure simplicial 2-complex $\complex$ formed by six triangles $\tau_1$, $\tau_2$, $\tau_3$, $\tau_4$, $\tau_5$, and $\tau_6$, four of which meet along a single edge.}
    \label{fig:triangle-fan}
\end{subfigure}%
\hfill%
\begin{subfigure}{0.4\textwidth}
    \centering
    \includegraphics[scale=\smallscale]{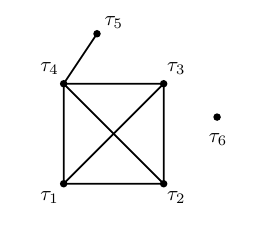}
    \caption{The dual graph $\dual(\complex_2)$ of $\complex_2$. As $\tau_6$ shares no edge with any other triangle in $\complex$, its corresponding vertex is isolated.}
    \label{fig:triangle-fan-dual}
\end{subfigure}%
\caption{Example of a pure simplicial $2$-complex and its dual graph.}
\label{fig:trg-fan-dual}
\end{figure}

\subsubsection{Barycentric subdivisions}
\label{ssec:barycentric}

Given a (cubical or simplicial) complex $\complex$, its \emph{barycentric subdivision} is a \textbf{simplicial} complex $\complex'$ defined abstractly as follows. For the ground set $\set'$ of $\complex'$ we have $\set' = \complex$. A~$(k+1)$-tuple $\{\sigma_0,\ldots,\sigma_k\} \subset \set'$ forms a $k$-simplex of $\complex'$ if and only if $\sigma_i \subset \sigma_j$ for every $0 \leq i < j \leq k$. We denote the 2\textsuperscript{nd} (resp.\ $\ell$\textsuperscript{th}) iterated barycentric subdivision of a complex $\complex$ by $\complex''$ (resp.\ $\complex^{(\ell)}$). See \Cref{fig:barycentric,fig:subdivision} for examples. The following are simple consequences of the definitions.

\begin{observation}
For any $0 \leq l \leq k$, the $k$-simplex has $\binom{k}{l}$ $l$-faces.
\label{claim:simplex-faces}
\end{observation}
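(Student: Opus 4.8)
This is essentially a one-line enumeration, so my plan is short. I would invoke only the definition of a simplex: the $k$-simplex on a vertex set $V$ is, as an abstract simplicial complex, the full power set $2^{V}$, so its faces are exactly the subsets of $V$ (equivalently, geometrically, the convex hulls of these subsets), and distinct subsets give distinct faces. Thus counting the faces of the $k$-simplex of any prescribed size amounts to counting the subsets of $V$ of that size, the bijection sending each face to its own set of vertices.

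From here the claim is immediate. The vertex set $V$ of the $k$-simplex has $k$ elements, and an $l$-face is a face with $l$ vertices, that is, an $l$-element subset of $V$; hence the number of $l$-faces equals the number of $l$-element subsets of a $k$-element set, which is $\binom{k}{l}$ by the very definition of the binomial coefficient. As a sanity check, summing over $0 \leq l \leq k$ recovers $\sum_{l=0}^{k}\binom{k}{l} = 2^{k}$, the total number of faces of the $k$-simplex, which is the number of subsets of a $k$-element set.

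I do not anticipate any genuine obstacle here: the argument is little more than a restatement of the definition of $\binom{k}{l}$. The only point requiring a moment's care is to keep the indexing convention straight --- recording a face by its number of vertices, so that an ``$l$-face'' corresponds to an $l$-element subset of $V$ --- after which there is nothing left to prove.
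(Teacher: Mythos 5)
The paper gives no proof (the statement is flagged as an Observation), but your argument and the printed statement share the same off-by-one error, and both are incorrect under the paper's own conventions. In \Cref{ssec:complexes} the dimension of a face is $\dim\sigma = |\sigma| - 1$, so a $k$-simplex has $k+1$ vertices (not $k$), and an $l$-face is an $(l+1)$-element subset of the vertex set (not an $l$-element one). The correct count is therefore $\binom{k+1}{l+1}$, not $\binom{k}{l}$; for a triangle ($k=2$) this gives $\binom{3}{2}=3$ edges, as it should, whereas $\binom{2}{1}=2$. Your ``sanity check'' is off for the same reason: the number of nonempty faces of a $k$-simplex is $2^{k+1}-1$, not $2^{k}$.

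The bijection-with-subsets strategy you use is exactly the right one; only the cardinalities fed into it need fixing. The binomial coefficient as printed in the paper appears to carry the same slip (its downstream use in the proof of \Cref{claim:estimate}, which invokes ``$\binom{2d}{d}$ faces of dimension $d$'' in a $2d$-simplex, should read $\binom{2d+1}{d+1}$), though this does not affect the $d^{O(d)}$ bound in \Cref{thm:tww-Gd}. So: reproduce the argument verbatim but with a $(k+1)$-element vertex set and $(l+1)$-element faces, and either correct the statement to $\binom{k+1}{l+1}$ or be explicit about why the weaker $\binom{k}{l}$ suffices for what follows.
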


\begin{observation}
The barycentric subdivision of the $k$-simplex contains $k!$ $k$-simplices.
\label{claim:bary-simplex}
\end{observation}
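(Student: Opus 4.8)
The plan is to unwind the definition of the barycentric subdivision and put the $k$-simplices of $\simplex'$ — writing $\simplex$ for the $k$-simplex — in bijection with the linear orderings of its vertex set, so that the count $k!$ drops out immediately.

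Concretely, the steps I would carry out are as follows. Let $V$ denote the vertex set of $\simplex$, so that $|V| = k$ and the faces of $\simplex$ are exactly the nonempty subsets of $V$. By the definition of $\simplex'$, its vertices are the faces of $\simplex$, and a collection of faces spans a simplex of $\simplex'$ precisely when it forms a chain under inclusion; hence a $k$-simplex of $\simplex'$ is a chain $\tau_1 \subsetneq \tau_2 \subsetneq \cdots \subsetneq \tau_k$ of nonempty faces of $\simplex$. The cardinalities $|\tau_1| < |\tau_2| < \cdots < |\tau_k|$ then form a strictly increasing sequence of $k$ positive integers, each at most $|V| = k$, which forces $|\tau_i| = i$ for every $i$; in particular $\tau_k = V$, so the chain is a \emph{maximal} flag of faces of $\simplex$. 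Finally, such a maximal flag records exactly a linear ordering $(u_1, \dots, u_k)$ of $V$, namely the one with $\{u_i\} = \tau_i \setminus \tau_{i-1}$ for every $i$ (setting $\tau_0 \defeq \emptyset$), and conversely every ordering $(u_1, \dots, u_k)$ of $V$ yields the flag $\tau_i \defeq \{u_1, \dots, u_i\}$; these two assignments are mutually inverse. Hence the $k$-simplices of $\simplex'$ are in bijection with the permutations of $V$, of which there are $k!$.

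I do not anticipate any real obstacle: the statement is little more than the standard fact that the maximal chains in the face poset of a simplex are the same data as the orderings of its vertices, and every step above is a direct consequence of the definitions. If one prefers to bypass the bookkeeping with chains, an equally short alternative is induction on $k$: since $\simplex'$ is the cone with apex the barycenter of $\simplex$ over the barycentric subdivision $(\partial\simplex)'$ of the boundary $\partial\simplex$, and since $\simplex$ has $k$ facets, each a $(k-1)$-simplex, the number $N(k)$ of $k$-simplices of $\simplex'$ satisfies $N(k) = k \cdot N(k-1)$ with base case $N(1) = 1$, whence $N(k) = k!$.
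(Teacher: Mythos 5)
The paper gives no proof of this observation---it is introduced, together with the two neighboring observations, as an immediate consequence of the definitions---so there is no argument of the paper to compare against. Your bijective strategy is the right one in principle: top-dimensional simplices of the barycentric subdivision are exactly the maximal chains in the face poset, and those are the same data as linear orderings of the vertices; the alternative induction via the cone over the boundary of $\simplex$ is also the standard shortcut.

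However, your count is off by a factor of $k+1$, and the slip is in the very first line: you assert $|V| = k$, whereas under the paper's own convention $\dim\sigma = |\sigma|-1$ the $k$-simplex has $k+1$ vertices. Correspondingly, a $k$-simplex of $\simplex'$ is a chain of $k+1$ (not $k$) faces $\tau_0 \subsetneq \tau_1 \subsetneq \cdots \subsetneq \tau_k$, which forces $|\tau_i| = i+1$ and $\tau_k = V$ with $|V| = k+1$; these maximal flags are in bijection with the $(k+1)!$ orderings of $V$. Likewise a $k$-simplex has $k+1$ facets, not $k$, so the cone recursion is $N(k) = (k+1)\,N(k-1)$, again giving $(k+1)!$. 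Your derivation lands on $k!$ only because the erroneous $|V|=k$ cancels against the erroneous chain length. To be fair, \Cref{claim:bary-simplex} as printed is itself off by the same factor, and so is \Cref{claim:simplex-faces}, which under the paper's conventions should read $\binom{k+1}{l+1}$; this has no bearing on the main theorem, since these counts feed only into the coarse $d^{O(d)}$ estimate in \Cref{claim:estimate}, where the missing polynomial factor is absorbed. But the observation as literally stated does not hold under the paper's definitions, and the proposed proof establishes it only via two compensating errors rather than a correct derivation.
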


\begin{observation}
The barycentric subdivision of the $k$-cube contains $2^k k!$ $k$-simplices.
\label{claim:bary-cube}
\end{observation}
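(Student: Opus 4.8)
The plan is to read the count off directly from the definition of the barycentric subdivision. Regard the $k$-cube $[0,1]^k$ as a cubical complex $\complex$ consisting of all of its faces, of every dimension $0,1,\ldots,k$. By definition, the $k$-simplices of the barycentric subdivision $\complex'$ are in bijection with the strictly increasing chains $\sigma_0 \subsetneq \sigma_1 \subsetneq \cdots \subsetneq \sigma_k$ of faces of $\complex$. The first step is to observe that, since such a chain has $k+1$ members whose dimensions strictly increase inside $\{0,1,\ldots,k\}$, one is forced to have $\dim\sigma_i = i$ for every $i$; in particular $\sigma_k$ is the unique top-dimensional face (the cube itself) and $\sigma_0$ is a vertex. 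Thus counting $k$-simplices of $\complex'$ is the same as counting \emph{maximal flags} $\sigma_0 \subsetneq \sigma_1 \subsetneq \cdots \subsetneq \sigma_{k-1} \subsetneq [0,1]^k$ of faces of the cube.

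Next I would build such a flag from the top down, using the self-similarity of cubes. A face of $[0,1]^k$ is obtained by fixing some coordinates to $0$ or $1$ and leaving the remaining ones free; an $(i+1)$-dimensional face has $i+1$ free coordinates and is itself combinatorially an $(i+1)$-cube, which has exactly $2(i+1)$ facets (choose one of the $i+1$ free coordinates and fix it to $0$ or to $1$). Hence, once $\sigma_{i+1}$ has been selected, there are precisely $2(i+1)$ choices for $\sigma_i$. Since $\sigma_k$ is forced, multiplying the independent choices for $\sigma_{k-1}, \sigma_{k-2}, \ldots, \sigma_0$ in turn gives
\[
  \#\{\,k\text{-simplices of }\complex'\,\} \;=\; \prod_{j=1}^{k} 2j \;=\; 2^k \prod_{j=1}^{k} j \;=\; 2^k\, k!,
\]
as claimed. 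Equivalently, writing $f(k)$ for this number, the same argument yields the recursion $f(k) = 2k \cdot f(k-1)$ with $f(0) = 1$, which solves to $f(k) = 2^k k!$.

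There is essentially no obstacle here; the only point worth stating explicitly is the elementary structural fact that a facet of a combinatorial $m$-cube is again a combinatorial cube and that an $m$-cube has $2m$ facets, so that the branching factor at each step of the flag is $2(i+1)$ rather than the $i+1$ one would get for a simplex — the latter being exactly how \Cref{claim:bary-simplex} and the value $k!$ arise, since an $m$-simplex has $m+1$ facets. As a sanity check, the formula evaluates to $1$ for $k=0$ (a point), to $2$ for $k=1$ (an interval split at its midpoint), and to $8$ for $k=2$ (the square cut into eight triangles by its center together with its four edge midpoints), all of which are correct.
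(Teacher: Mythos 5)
Your argument is correct. The paper states this observation without proof (calling it a ``simple consequence of the definitions''), and the flag-counting you give---noting that a maximal chain in the face poset of the $k$-cube has one face of each dimension, and that an $m$-cube has $2m$ facets, so the chain can be built top-down with branching factor $2m$ at each stage, giving $\prod_{m=1}^{k}2m = 2^k k!$---is exactly the natural argument one would supply, and matches the way \Cref{claim:bary-simplex} arises from the $m+1$ facets of an $m$-simplex.
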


\begin{figure}[ht]
\centering
\begin{subfigure}{1\textwidth}
	\includegraphics{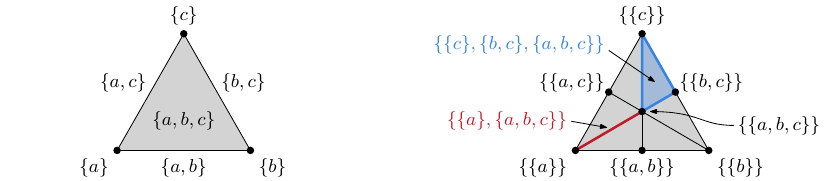}
	\caption{A triangle presented as a simplicial complex (left) and its barycentric subdivision (right).}
	\label{fig:barycentric-trg}
\end{subfigure}

\bigskip

\begin{subfigure}{1\textwidth}
	\includegraphics{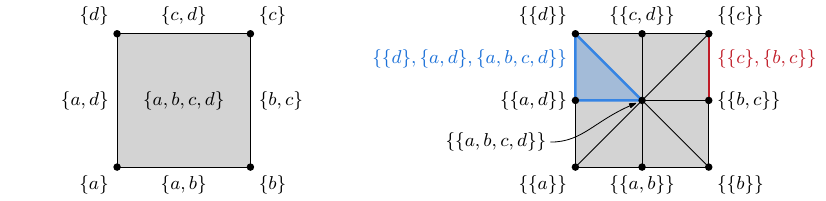}
	\caption{A square presented as a cubical complex (left) and its barycentric subdivision (right).}
	\label{fig:barycentric-square}
\end{subfigure}

	\caption{The effect of barycentric subdivision on a triangle and on a square.}
	\label{fig:barycentric}
\end{figure}

\begin{figure}[ht]
\centering
\begin{subfigure}{100pt}
    \includegraphics{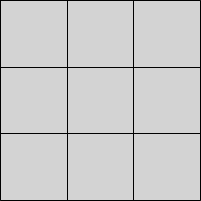}
    \caption{$\hcubic^{2,4}$}
    \label{fig:grid}
\end{subfigure}%
\hfill%
\begin{subfigure}{100pt}
    \includegraphics{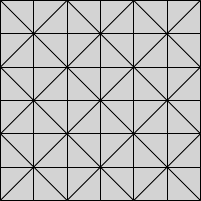}
    \caption{$(\hcubic^{2,4})'$}
    \label{fig:sd1}
\end{subfigure}%
\hfill%
\begin{subfigure}{100pt}
    \includegraphics{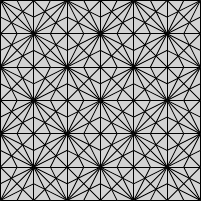}
    \caption{$(\hcubic^{2,4})''$}
    \label{fig:sd2}
\end{subfigure}
        
\caption{The effect of two barycentric subdivisions on the 2-dimensional hypercubic honeycomb.}
\label{fig:subdivision}
\end{figure}

\subsubsection{Manifolds and their triangulations}
\label{ssec:manifolds}

Manifolds---the main objects of interest in this paper---can be regarded as higher dimensional analogs of surfaces. A \emph{$d$-dimensional topological manifold with boundary} (or \emph{$d$-manifold}, for short) is a topological space\footnote{As a topological space a manifold is required to be \emph{second countable} \cite[p.\ 2]{prasolov2006elements} and \emph{Hausdorff} \cite[p.\ 87]{prasolov2006elements}.} $\manifold$, where every point has an open neighborhood homeomorphic to $\mathbb{R}^d$, or to the closed upper half-space $\{(x_1,\ldots,x_d) \in \mathbb{R}^d : x_1 \geq 0\}$. The points of $\manifold$ that do \textbf{not} have a neighborhood homeomorphic to $\mathbb{R}^d$ constitute the \emph{boundary} $\partial\manifold$ of $\manifold$. If a manifold $\manifold$ satisfies $\partial\manifold = \emptyset$, then $\manifold$ is called a \emph{closed} manifold.

In this paper $\manifold$ always denotes a compact manifold.

\subparagraph*{Smooth manifolds.} The main result of this paper (\Cref{thm:tww-mfd}) applies for manifolds that have an additional property, namely \emph{smoothness}. As we will not need to work with the actual definition of smoothness, but merely rely on it, we only give a brief definition here. For more background on smooth manifolds, we refer to \cite[Chapter~5]{prasolov2006elements} and \cite{lee2013smooth}.

Given a connected and open subset $U \subset \manifold$ and a homeomorphism $\varphi\colon U \rightarrow \varphi(U)$ onto an open subset of $\mathbb{R}^d$, the pair $(U,\varphi)$ is called a \emph{chart}. Given two charts $(U_\alpha,\varphi_\alpha)$ and $(U_\beta,\varphi_\beta)$ with $U_\alpha \cap U_\beta \neq \emptyset$, the map $\tau_{\alpha,\beta}\colon \varphi_\alpha(U_\alpha \cap U_\beta) \rightarrow \varphi_\beta(U_\alpha \cap U_\beta)$ defined via $\tau_{\alpha,\beta} = \varphi_\beta \circ \varphi_\alpha^{-1}$ is called a \emph{transition map}.  A \emph{smooth structure} on a topological manifold $\manifold$ with $\partial\manifold = \emptyset$ is a collection $\atlas = \{(U_\alpha,\varphi_\alpha) : \alpha \in A\}$ of charts that satisfies the following three properties.
\begin{enumerate}
	\item The sets $U_\alpha$ cover $\manifold$, that is $\bigcup_{\alpha \in A} U_\alpha = \manifold$.
	\item For any $\alpha, \beta \in A$ with $U_\alpha \cap U_\beta \neq \emptyset$, the transition map $\tau_{\alpha,\beta}$ is smooth.\footnote{See \cite[p.\ 185]{prasolov2006elements} for a discussion of (smooth) maps of manifolds.}
	\item The collection $\atlas$ is maximal in the sense that if $(U,\varphi)$ is a chart and for every $\alpha \in A$ with $U \cap U_\alpha \neq \emptyset$ the transition maps $\varphi \circ \varphi_\alpha^{-1}$ and $\varphi_\alpha \circ \varphi^{-1}$ are smooth, then $(U,\varphi) \in \atlas$.
\end{enumerate}

A topological manifold together with a smooth structure is called a \emph{smooth manifold}. By an appropriate modification of property \mynum{2} above, the definition extends to manifolds with non-empty boundary as well. We refer to \cite[Section 5.1.1]{prasolov2006elements} for details.

\subparagraph*{Triangulations.} Let $\manifold$ be a compact topological $d$-manifold. A simplicial complex $\tri$ whose geometric realization $\|\tri\|$ is homeomorphic to $\manifold$ is called a \emph{triangulation} of $\manifold$.  It follows that $\tri$ is a pure simplicial complex of dimension $d$ (see \Cref{ssec:complexes}). For $d \leq 3$, every topological $d$-manifold admits a triangulation \cite{moise1952affine, rado1925riemannschen}, however, for $d > 3$ this is generally not true (see \cite{manolescu2016lectures} for an overview). Smooth manifolds can nevertheless always be triangulated, irrespective of their dimension, e.g., by work of Whitney \cite[Chapter IV.B]{whitney1957geometric} (cf.\ \Cref{sec:whitney}). 

Given a $d$-dimensional triangulation $\tri$, recall that its \emph{dual graph} $\dual(\tri)$ is the graph with vertices corresponding to the $d$-simplices of $\tri$, and edges to the face gluings, i.e, those $(d-1)$-simplices of $\tri$ that are contained in precisely two $d$-simplices. Note that $\deg(v) \leq d+1$ for any vertex $v$ of $\dual(\tri)$. The proof of the following proposition is left to the reader.

\begin{proposition}
\label{prop:submanifold-induced}
Let $\tri$ be a triangulation of a $d$-manifold $\manifold$ and $\mathscr{U}$ be a collection of $d$-simplices of $\tri$ that define a submanifold of $\manifold$. Then $\dual(\mathscr{U})$ is an induced subgraph of $\dual(\tri)$.
\end{proposition}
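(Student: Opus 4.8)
The plan is to unwind the two definitions of ``dual graph'' recalled above and to check that they restrict compatibly. First I would let $\complex_{\mathscr{U}}$ denote the pure $d$-subcomplex of $\tri$ generated by $\mathscr{U}$, that is, the set of all faces of the $d$-simplices in $\mathscr{U}$. By hypothesis $\geomrel{\complex_{\mathscr{U}}}$ is a $d$-dimensional submanifold of $\manifold$, so $\complex_{\mathscr{U}}$ is itself a triangulation of a compact $d$-manifold and $\dual(\mathscr{U})$ is, by definition, $\dual(\complex_{\mathscr{U}})$. Its vertex set is $\mathscr{U}$, which is a subset of the set $\tri(d)$ of $d$-simplices of $\tri$, i.e.\ the vertex set of $\dual(\tri)$; hence the whole statement reduces to showing that for distinct $\simplex,\tau \in \mathscr{U}$ one has $\{\simplex,\tau\}\in E(\dual(\mathscr{U}))$ if and only if $\{\simplex,\tau\}\in E(\dual(\tri))$.

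The one external ingredient I would invoke is the standard fact that in a triangulation of a $d$-manifold every $(d-1)$-simplex is a face of at most two $d$-simplices, applied both to $\tri$ and to $\complex_{\mathscr{U}}$. With this in hand, the forward implication is immediate: if $\simplex,\tau\in\mathscr{U}$ share a $(d-1)$-face $\rho$ in $\complex_{\mathscr{U}}$, then, since $\complex_{\mathscr{U}}\subseteq\tri$, the simplex $\rho$ lies in $\tri$ and is contained in the two distinct $d$-simplices $\simplex,\tau$, hence in \emph{precisely} these two, so $\rho$ is a face gluing of $\tri$ and witnesses the edge $\{\simplex,\tau\}$ in $\dual(\tri)$. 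For the converse, if $\{\simplex,\tau\}\in E(\dual(\tri))$ with $\simplex,\tau\in\mathscr{U}$, then the associated gluing face $\rho$ is contained in precisely two $d$-simplices of $\tri$, necessarily $\simplex$ and $\tau$; since $\complex_{\mathscr{U}}$ is downward closed, $\rho\in\complex_{\mathscr{U}}$, and it is contained in exactly the two $d$-simplices $\simplex,\tau$ of $\complex_{\mathscr{U}}$ (there being no further $d$-simplices containing it even in $\tri$), so $\rho$ is a face gluing of $\complex_{\mathscr{U}}$ and yields the edge $\{\simplex,\tau\}$ in $\dual(\mathscr{U})$. Combining the two directions shows $\dual(\mathscr{U})$ is exactly the subgraph of $\dual(\tri)$ induced on $\mathscr{U}$.

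The only point requiring care, and the one I would flag, is the interplay of the two conventions for the dual graph: for a general pure complex an edge merely records a shared codimension-one face, whereas for a manifold triangulation an edge records a codimension-one face lying in \emph{precisely two} top-dimensional simplices. The manifold hypothesis on $\manifold$ and the submanifold hypothesis on $\mathscr{U}$ are exactly what make these conventions agree on both $\tri$ and $\complex_{\mathscr{U}}$; in particular a $(d-1)$-face of $\complex_{\mathscr{U}}$ that is ``boundary'' in the submanifold but ``interior'' in $\manifold$ causes no mismatch, since it contributes no edge on either side among vertices of $\mathscr{U}$. No step is technically hard, so the proposal amounts to writing these observations out carefully.
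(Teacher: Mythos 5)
Your argument is correct. The paper explicitly leaves the proof of this proposition to the reader, so there is no proof to compare against, but your reasoning is the intended one: the vertex sets nest, and the manifold hypothesis on both $\tri$ and the submanifold triangulated by $\mathscr{U}$ guarantees that each $(d-1)$-face lies in at most two top-dimensional simplices, which is exactly what makes the two conventions for the dual graph agree and makes the edge sets restrict compatibly. You also correctly flag the one genuine subtlety, namely that a face which is boundary in the submanifold but interior in $\manifold$ cannot create a spurious edge among vertices of $\mathscr{U}$.
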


\section{Whitney's method}
\label{sec:whitney}

A seminal result of Whitney states that a smooth $d$-dimensional manifold $\manifold$ always admits a smooth embedding into a $2d$-dimensional Euclidean space.

\begin{theorem}[strong Whitney embedding theorem {\cite[Theorem 5]{whitney1944intersections}}, cf.\ {\cite[Theorem 6.19]{lee2013smooth}}]
For $d > 0$, every smooth $d$-manifold admits a smooth embedding into $\mathbb{R}^{2d}$.
\label{thm:strong-whitney}
\end{theorem}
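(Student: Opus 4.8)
The statement is the classical strong Whitney embedding theorem, so the plan is to follow Whitney's original argument in three stages: embed $\manifold$ in a Euclidean space of large dimension, cut that dimension down to $2d+1$ by generic projections, and then spend the last unit of dimension removing self-intersections with the \emph{Whitney trick}. For the first stage I would use compactness of $\manifold$: cover it by finitely many charts, take a subordinate partition of unity, and glue the local coordinate maps together into a single smooth injective immersion $\iota\colon \manifold \hookrightarrow \mathbb{R}^{N}$ for some large $N$, noting that an injective immersion out of a compact space is automatically an embedding (this is the easy Whitney embedding theorem; if $\manifold$ has boundary one may instead embed its double and restrict).

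Second, I would descend one dimension at a time by orthogonal projection along a generic direction. If $\pi_v$ denotes projection of $\mathbb{R}^N$ onto the hyperplane $v^{\perp}\cong\mathbb{R}^{N-1}$, then $\pi_v\circ\iota$ fails to be injective only when $v$ lies on a secant line of $\iota(\manifold)$, and fails to be an immersion only when $v$ is tangent to $\iota(\manifold)$. The secant directions form the image of a smooth map from $(\manifold\times\manifold)\setminus\Delta$, of dimension $2d$, into $\mathbb{RP}^{N-1}$, and the tangent directions form the image of a map from the projectivized tangent bundle $\mathbb{P}(T\manifold)$, of dimension $2d-1$; by Sard's theorem both images are null as long as $N-1>2d$, so a generic $v$ preserves the embedding, and iterating gives a smooth embedding $\manifold\hookrightarrow\mathbb{R}^{2d+1}$. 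One further generic projection to $\mathbb{R}^{2d}$ still misses the tangent obstruction, whose source has dimension $2d-1<2d=\dim\mathbb{RP}^{2d}$, so it remains an immersion; but the secant source now has dimension exactly $2d$, so self-intersections are unavoidable. After a final small perturbation, and using compactness, the best one obtains is an immersion $f\colon\manifold\to\mathbb{R}^{2d}$ with finitely many double points, all transverse, and no triple points.

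Third — and this is the heart of the argument — I would remove the double points by the Whitney trick. Assuming $\manifold$ connected and $d\geq 3$, one first uses local ``finger moves'' (regular homotopies of $f$ that introduce or cancel a pair of double points of opposite local sign) to arrange the double points into pairs $p_i^{+},p_i^{-}$ of opposite sign, each pair joined by a \emph{Whitney circle}: an embedded loop formed by an arc $\alpha_i$ in one sheet of $f(\manifold)$ and an arc $\beta_i$ in the other, meeting only at $p_i^{+}$ and $p_i^{-}$. One then spans each Whitney circle by an embedded $2$-disk $W_i\subset\mathbb{R}^{2d}$ whose interior is disjoint from $f(\manifold)$ and from the other disks; since $\dim W_i=2$ and $2+d<2d$ precisely when $d\geq 3$, general position supplies such disks, and one checks that the normal bundle of $W_i$ splits so the cancellation is possible. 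A standard ambient isotopy supported near $W_i$ then pushes one sheet across $W_i$, deleting both $p_i^{+}$ and $p_i^{-}$ without creating new double points, and doing this for every pair produces an embedding into $\mathbb{R}^{2d}$. The remaining cases are handled separately and use no trick: for $d=1$ a compact $1$-manifold is a disjoint union of circles and arcs, which embeds in $\mathbb{R}^2$; for $d=2$, where the count $2+2=2d$ fails, a direct argument shows every compact surface embeds in $\mathbb{R}^4$.

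The step I expect to be genuinely delicate is the sign bookkeeping that justifies pairing up \emph{all} the double points. When $d$ is even the local sign of a double point is only well-defined up to a global ambiguity, so the relevant self-intersection invariant of $f$ lives in $\mathbb{Z}$ or in $\mathbb{Z}/2$, and one must verify it vanishes for an immersion arising from a projected embedding, for otherwise an uncancellable double point would survive; orienting the sheets correctly, checking the normal-bundle splitting along each Whitney disk, and writing down the local model for the cancellation isotopy also require care. By contrast, the partition-of-unity embedding and the Sard-theorem general position steps are routine once the dimension counts above are in place.
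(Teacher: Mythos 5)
The paper cites this as a classical theorem (Whitney 1944; see also Lee's textbook) without giving a proof, so there is no in-paper argument to compare against. Your sketch is a correct outline of the standard Whitney argument: embed in some $\mathbb{R}^N$ via a partition of unity, project generically down to $\mathbb{R}^{2d+1}$ using the Sard-type dimension counts you give for the secant source (dimension $2d$) and the tangent source (dimension $2d-1$), project once more to $\mathbb{R}^{2d}$ to obtain an immersion with finitely many transverse double points and no triple points, and cancel double points in opposite-sign pairs via the Whitney trick once $d\geq 3$ (the count $2+d<2d$), with $d=1,2$ handled separately by the classification of compact curves and surfaces. You also correctly flag the two genuinely delicate ingredients---the vanishing of the algebraic self-intersection invariant, which licenses pairing up all double points, and the normal-framing condition along the Whitney disks needed for the cancellation isotopy---which are precisely the technical core of Whitney's 1944 paper; one might add that for disconnected $\manifold$ one embeds components separately and translates them apart, and that smoothing the double of a manifold with boundary requires a collar choice, but neither of these affects the structure of the argument.
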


An important consequence of \Cref{thm:strong-whitney} is that smooth manifolds can be triangulated.

\begin{theorem}[triangulation theorem {\cite[Chapter IV.B]{whitney1957geometric}}, cf.\ {\cite[Theorem 1.1]{boissonnat2021triangulating}}] Every compact, smooth $d$-manifold $\manifold$ embedded in some Euclidean space $\mathbb{R}^m$ admits a triangulation.
\label{thm:whitney-trg}
\end{theorem}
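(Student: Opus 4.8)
The statement hands us a compact smooth $d$-manifold $\manifold$ embedded in $\mathbb{R}^m$ (by \Cref{thm:strong-whitney} one could even take $m=2d$, though any $m$ works below). The plan is to read a combinatorial structure off the way a sufficiently fine cubical grid slices through $\manifold$, using the cubical honeycomb $\hcubic^{m,n}$ central to this paper. First I would record the classical fact that, $\manifold$ being compact and smoothly embedded, it has positive \emph{reach} $\tau$: on the open $\tau$-neighbourhood $N_\tau$ of $\manifold$ in $\mathbb{R}^m$ the nearest-point projection $\pi\colon N_\tau\to\manifold$ is well defined and smooth, and below scale $\tau$ the local geometry of $\manifold$ is controlled by $\tau$ (for instance $\manifold\cap B(p,r)$ is a smooth $d$-ball whenever $p\in\manifold$ and $r<\tau$). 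Then, fixing $r<\tau$ and a large integer $k$, I would take a \emph{generic} translate of the rescaled honeycomb $\tfrac1k\,\hcubic^{m,n}$, with $n$ large enough that $N_r$ lies in the interior of its support; its cells then have diameter at most $\sqrt{m}/k<r$. A routine transversality argument — Sard's theorem applied to the finitely many affine maps cutting out the faces of the grid — shows that a generic translate makes $\manifold$ transverse to every face. Let $\mathcal Q$ be the subcomplex of all closed $m$-cubes of this grid that meet $\manifold$, together with their faces; for $k$ large one has $\geomrel{\mathcal Q}\subseteq N_\tau$ and $\manifold\subseteq\interior\geomrel{\mathcal Q}$.

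The core of the argument is that, being transverse to the tiny cells of $\mathcal Q$, the manifold $\manifold$ inherits from $\mathcal Q$ a regular CW decomposition. By transversality, for each open $j$-cell $F$ of $\mathcal Q$ the set $\manifold\cap F$ is a smooth manifold of dimension $j-(m-d)$ (empty for $j<m-d$); control by $\tau$ then forces, once $k$ is large enough, each nonempty $\manifold\cap\overline F$ to be a single closed ball of dimension $j-(m-d)$ whose boundary is exactly $\manifold\cap\partial F$, the induced cell structure on that boundary sphere being the restriction of the same construction to $\partial F$. Hence $\mathcal W\defeq\{\,\manifold\cap\overline F : F\in\mathcal Q\,\}$ is a finite regular CW complex with $\geomrel{\mathcal W}=\manifold$, carrying one $i$-cell for each $(m-d+i)$-cell of $\mathcal Q$ that meets $\manifold$; in particular $\dim\mathcal W=d$, its top cells arising from the $m$-cubes of $\mathcal Q$. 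It then remains to invoke the classical fact that a finite regular CW complex is triangulable: passing to the order complex of the face poset of $\mathcal W$ (the analogue of barycentric subdivision for $\mathcal W$) yields a simplicial complex $\tri$ whose geometric realization is canonically homeomorphic to $\geomrel{\mathcal W}=\manifold$, precisely because $\mathcal W$ is regular. Thus $\geomrel{\tri}\cong\manifold$, and $\tri$ is the sought triangulation.

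I expect the technical heart to be the claim that a fine grid cuts $\manifold$ into honest balls with the expected face incidences, uniformly over all cells of $\mathcal Q$. I would attack it chart by chart: cover $\manifold$ by finitely many open sets in each of which, after an isometry of $\mathbb{R}^m$, $\manifold$ is the graph of a smooth map over an affine $d$-plane with gradient as small as we please; then, for a grid finer than both $\tau$ and the chart sizes, compare each $\manifold\cap\overline F$ with the intersection of that $d$-plane with $\overline F$ — an affine object whose combinatorics is transparent — and use transversality to conclude the two agree up to an ambient homeomorphism of $\overline F$. This quantitative comparison is exactly what Whitney carries out in \cite[Chapter~IV.B]{whitney1957geometric} and what later treatments such as \cite[Theorem~1.1]{boissonnat2021triangulating} reformulate; the transversality and the fineness designed into the grid above are what make it go through.
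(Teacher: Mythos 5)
Your route is valid but differs from Whitney's in a way worth noting. Whitney (as recalled in \Cref{sec:whitney}) first passes to the barycentric subdivision $L$ of the cubical grid, perturbs its vertices to obtain a simplicial complex $L^\ast$ in general position with respect to $\iota(\manifold)$, and constructs the triangulation $\tri$ explicitly as a subcomplex of the $d$-skeleton of $(L^\ast)'$ --- a secant approximation that lives \emph{near} $\manifold$ rather than on it, and whose homeomorphism to $\manifold$ is given by projection. You instead keep the cubical grid, use a generic translate rather than a vertex perturbation, read off a regular CW decomposition $\mathcal{W}$ directly \emph{on} $\manifold$ from the transverse intersections $\manifold \cap \overline{F}$, and invoke the fact that a finite regular CW complex is triangulated by the order complex of its face poset. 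This is a cleaner, more modern packaging of the same core idea, and you correctly identify --- and defer to Whitney and to Boissonnat et al.\ --- the real technical heart: proving that once the grid is fine enough relative to the reach, each nonempty $\manifold \cap \overline{F}$ is a single closed ball of the expected dimension whose boundary is $\manifold \cap \partial F$, and that these balls assemble into a regular CW structure with the face incidences inherited from the grid. That uniformity claim is precisely what occupies most of Whitney's Chapter~IV.B, so your version inherits the same nontrivial workload; it repackages it rather than avoiding it. One thing Whitney's formulation buys that yours does not is that $\tri$ is literally a subcomplex of $((\hcubic^{2d,n})'')_d$, which is exactly what \Cref{prop:whitney-dual-induced} and the twin-width estimate in \Cref{sec:proof} rely on; your order complex of $\mathcal{W}$ would instead sit inside a \emph{single} barycentric subdivision of the cubical grid, which --- if the CW argument were carried through in full --- would in principle save one subdivision in the bound, so the variant is not merely cosmetic.
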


Next, we give a very brief and high-level overview of Whitney's method of triangulating smooth manifolds based on \cite[Chapter IV.B]{whitney1957geometric} sufficient for our purposes. To the interested reader we also recommend \cite{boissonnat2021triangulating}, where Whitney's method is recast in a computational setting.

\subparagraph*{Triangulating smooth manifolds.}
Let $\manifold$ be a compact smooth $d$-manifold. By \Cref{thm:strong-whitney} there exists a smooth embedding $\iota\colon\manifold\rightarrow\mathbb{R}^{2d}$. Given such an embedding, we choose a sufficiently fine (with respect to $\iota$) hypercubic honeycomb decomposition of $\mathbb{R}^{2d}$, denoted by $L_0$.
Next, we pass to the first barycentric subdivision $L$ of $L_0$. (Geometrically, $L$ is obtained from $L_0$ by subdividing each $k$-dimensional hypercube of $L_0$ into $(2k)!!$ simplices.) By slightly perturbing the vertices of $L$ we obtain a triangulation $L^\ast$ of $\mathbb{R}^{2d}$, which is combinatorially isomorphic to $L$, but is in \emph{general position} with respect to $\iota(\manifold) \subset \mathbb{R}^{2d}$. Now, by the work of Whitney, $L^\ast$ induces a triangulation $\tri$ of $\manifold$, where, importantly, $\tri$ is a subcomplex of the $d$-skeleton of the barycentric subdivision $(L^\ast)'$ of $L^\ast$. 

See \Cref{fig:whitney} for an illustration of this procedure for $d=1$.

\begin{figure}[thbp]
	\centering
	\begin{minipage}[t]{.27\linewidth}
		\includegraphics{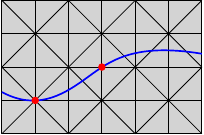}
		\subcaption{The triangulation $L$ and the image $\iota(\manifold)$ (blue). The red points indicate the vertices of $L$ contained by $\iota(\manifold)$.}\label{fig:whitney-base}
	\end{minipage}\hfill%
	\begin{minipage}[t]{.27\linewidth}
		\includegraphics{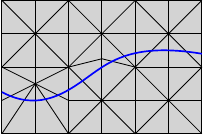}
		\subcaption{The perturbed triangulation $L^\ast$, which in general position with respect to  $\iota(\manifold)$.}\label{fig:whitney-perturb}
	\end{minipage}\hfill%
	\begin{minipage}[t]{.27\linewidth}
		\includegraphics{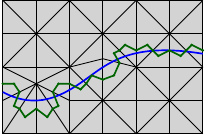}
		\subcaption{The resulting triangulation $\tri$ of $\manifold$ (dark green). $\tri$ is a subcomplex of $(L^\ast)'$.}\label{fig:whitney-approx}
	\end{minipage}
	\caption{Illustration of Whitney's method of triangulating ambient submanifolds ($d=1$).\label{fig:whitney}}
\end{figure}

Similar to \Cref{prop:submanifold-induced}, the next proposition is a direct consequence of the definitions.

\begin{proposition}
\label{prop:whitney-dual-induced}
For any \emph{Whitney triangulation}\footnote{Recall that a triangulation $\tri$ of a compact smooth manifold $\manifold$ is called a \emph{Whitney triangulation}, if $\tri$ is obtained via Whitney's method discussed in \Cref{sec:whitney}.} $\tri$ of a compact smooth $d$-manifold $\manifold$, we have that the dual graph $\dual(\tri)$ is an induced subgraph of $\dual(((\hcubic^{2d,n})'')_d)$.
\end{proposition}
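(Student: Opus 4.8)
The plan is to unwind the definitions, using the description of Whitney's method recalled above together with the combinatorics of barycentric subdivision. First I would use compactness of $\manifold$: the image $\iota(\manifold)\subset\mathbb{R}^{2d}$ is bounded, so only finitely many cells of the ambient honeycomb $L_0$ are involved in the construction of $\tri$, and this finite part of $L_0$ is combinatorially isomorphic to $\hcubic^{2d,n}$ for a suitable integer $n$ (after rescaling, a bounded box of the hypercubic honeycomb lattice in $\mathbb{R}^{2d}$ is just $\hcubic^{2d,n}$). Since barycentric subdivision is a purely combinatorial operation and $L^\ast$ is combinatorially isomorphic to $L=L_0'$, the subdivision $(L^\ast)'$ is combinatorially isomorphic to $L_0''$, hence to $(\hcubic^{2d,n})''$ over the relevant region. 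As $\tri$ is a subcomplex of the $d$-skeleton of $(L^\ast)'$ (see \Cref{sec:whitney}), it follows that, up to combinatorial isomorphism, $\tri$ is a subcomplex of the $d$-skeleton $((\hcubic^{2d,n})'')_d$.

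Next I would check that the two dual graphs are well defined and then match their edges. On one hand, $\tri$ triangulates a $d$-manifold, so it is a pure simplicial $d$-complex; on the other hand, $(\hcubic^{2d,n})''$ triangulates the $2d$-dimensional manifold with boundary $[1,n]^{2d}$, so it is pure, and therefore its $d$-skeleton $((\hcubic^{2d,n})'')_d$ is pure of dimension $d$ (cf.\ \Cref{ssec:complexes}). Every $d$-simplex of $\tri$ is a $d$-face of $(\hcubic^{2d,n})''$, hence a vertex of $\dual(((\hcubic^{2d,n})'')_d)$. Now let $\sigma,\tau$ be two distinct $d$-simplices of $\tri$. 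Then $\sigma$ and $\tau$ share a $(d-1)$-face in $(\hcubic^{2d,n})''$ if and only if $|\sigma\cap\tau|=d$, and in that case the common $(d-1)$-face already lies in $\tri$ because $\tri$ is downward closed. Since $\tri$ is a manifold triangulation, such a common $(d-1)$-face is contained in at most two $d$-simplices of $\tri$, hence in exactly two, namely $\sigma$ and $\tau$; so it contributes the edge $\{\sigma,\tau\}$ to $\dual(\tri)$. Conversely, every edge of $\dual(\tri)$ arises from a shared $(d-1)$-face and is therefore an edge of $\dual(((\hcubic^{2d,n})'')_d)$. Hence the edges of $\dual(\tri)$ are precisely the edges of $\dual(((\hcubic^{2d,n})'')_d)$ between vertices corresponding to $d$-simplices of $\tri$, which says exactly that $\dual(\tri)$ is an induced subgraph of $\dual(((\hcubic^{2d,n})'')_d)$.

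The only genuinely delicate point, and the one I would write out most carefully, is the reconciliation of the two conventions for the dual graph used in the paper: the convention for general pure complexes (join two $i$-faces that share an $(i-1)$-face), applied to the $d$-skeleton $((\hcubic^{2d,n})'')_d$, versus the convention for manifold triangulations (join two $d$-simplices across a $(d-1)$-face that lies in exactly two $d$-simplices), applied to $\tri$. These agree only thanks to the manifold hypothesis on $\tri$, which bounds by two the number of $d$-simplices of $\tri$ through any $(d-1)$-face, while passing to an induced subgraph automatically discards the ``spurious'' $d$-faces of $(\hcubic^{2d,n})''$ through a given $(d-1)$-face that are not faces of $\tri$. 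Everything else is bookkeeping that follows directly from the recap of Whitney's method, exactly as in the proof of \Cref{prop:submanifold-induced}.
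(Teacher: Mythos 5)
Your proof is correct and takes essentially the approach the paper intends: the paper states the proposition as a direct consequence of the definitions (and gives no proof), and your argument is precisely that unwinding, including the one genuinely needed observation that the two dual-graph conventions agree on $\tri$ because the manifold hypothesis bounds by two the number of $d$-simplices through any $(d-1)$-face.
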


\section{The proof of Theorem \ref{thm:tww-mfd}}
\label{sec:proof}

In this section we prove our main result, i.e., every compact smooth $d$-manifold $\manifold$  has twin-width $\tww(\manifold) \leq d^{O(d)}$.
To streamline the notation, we let $G_{d,n} = \dual(((\hcubic^{2d,n})'')_d)$, i.e., $G_{d,n}$ denotes the dual graph of the $d$-skeleton\footnote{Recall that the dual graph $\dual(\complex)$ of a pure $k$-dimensional complex $\complex$ has vertices corresponding to the $k$-faces of $\complex$ and two vertices are connected if and only if their corresponding $k$-faces share a $(k-1)$-face.} of the second barycentric subdivision of the hybercubic honeycomb $\hcubic^{2d,n}$.
The result is based on the following property of $G_{d,n}$.

\begin{theorem}
\label{thm:tww-Gd}
For the twin-width of the dual graph $G_{d,n} = \dual(((\hcubic^{2d,n})'')_d)$ of the $d$-skeleton of the second barycentric subdivision of the hypercubic honeycomb $\hcubic^{2d,n}$ we have
\[
	\tww(G_{d,n}) \leq d^{O(d)}.
\]
\end{theorem}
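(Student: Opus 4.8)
The plan is to realize $G_{d,n}$ as a \emph{subgraph} of a grid with diagonals $\griddiag_{n,D}$ whose dimension $D = D(d)$ depends only on $d$ (not on $n$), and then to invoke \Cref{thm:tww-grid-diag}. Two combinatorial facts about the second barycentric subdivision drive the argument: first, the $d$-simplices of $(\hcubic^{2d,n})''$ fall into ``local clusters'' of size $d^{O(d)}$, one cluster per $2d$-cube of $\hcubic^{2d,n}$; second, two $d$-simplices sharing a $(d-1)$-face necessarily lie in $2d$-cubes that coincide or touch.

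First I would set up a cube assignment. Since barycentric subdivision commutes with passing to a subcomplex, for every $2d$-cube $C$ of $\hcubic^{2d,n}$ the subcomplex of $(\hcubic^{2d,n})''$ supported on $\geomrel{C}$ is exactly the double barycentric subdivision $C''$ of $C$; in particular every $d$-simplex of $(\hcubic^{2d,n})''$ is supported on $\geomrel{C}$ for at least one top cube $C$. I assign each $d$-simplex $\sigma$ to the lexicographically smallest such cube $C(\sigma)$, which I identify with a point of $[n-1]^{2d}$ (its lowest corner). To bound the number of $d$-simplices assigned to a fixed $C$ I bound the number of $d$-faces of $C''$: by \Cref{claim:bary-cube}, $C'$ has $2^{2d}(2d)!$ top ($2d$-)simplices, and every $d$-face of $C'' = (C')'$ lies in the barycentric subdivision of one of them, while the barycentric subdivision of a single $2d$-simplex has at most $(d+2)^{2d+1}$ faces of dimension $d$ (such a face is a strictly increasing chain $F_0 \subsetneq \dots \subsetneq F_d$ of nonempty faces of the $2d$-simplex, and a chain of this shape is determined by the function sending each of the $2d+1$ vertices $v$ to the least index $i$ with $v \in F_i$, or to a symbol $\infty$ if $v \notin F_d$). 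Hence each top cube receives at most $f(d) \defeq 2^{2d}\,(2d)!\,(d+2)^{2d+1}$ of the $d$-simplices, and $f(d) = d^{O(d)}$.

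Next comes locality. If $d$-simplices $\sigma,\tau$ of $(\hcubic^{2d,n})''$ share a $(d-1)$-face, then $\geomrel{\sigma} \cap \geomrel{\tau} \neq \emptyset$, hence $\geomrel{C(\sigma)} \cap \geomrel{C(\tau)} \neq \emptyset$; since two closed unit cubes of the honeycomb meet precisely when their lowest corners are at $\ell^\infty$-distance at most $1$, this gives $\|C(\sigma) - C(\tau)\|_\infty \leq 1$. Now I assemble the embedding. For each cube fix an injection of its assigned $d$-simplices into $\{1,2\}^k$, where $k \defeq \lceil \log_2 f(d) \rceil$ (possible as $f(d) \leq 2^k$), writing $\ell(\sigma)$ for the code of $\sigma$, and set $\sigma \mapsto (C(\sigma),\ell(\sigma)) \in [n-1]^{2d} \times \{1,2\}^k$, regarded as a subset of $[n]^{2d+k}$. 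This map is injective, and on any edge $\{\sigma,\tau\}$ of $G_{d,n}$ the two images differ by at most $1$ in each of the first $2d$ coordinates (by locality) and by at most $1$ in each of the last $k$ coordinates (they lie in $\{1,2\}$), so the images are adjacent in $\griddiag_{n,\,2d+k}$. Thus $G_{d,n}$ is a subgraph of $\griddiag_{n,\,2d+k}$, hence $\red(G_{d,n})$ is a subtrigraph of $\red(\griddiag_{n,\,2d+k})$, and \Cref{thm:tww-grid-diag} yields $\tww(\red(G_{d,n})) \leq 2(3^{2d+k}-1)$. Since recoloring all black edges red never decreases twin-width (as already noted for grids with diagonals just before \Cref{thm:tww-grid-diag}), we get $\tww(G_{d,n}) \leq 2(3^{2d+k}-1)$. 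Finally $2d + k = 2d + \lceil \log_2 f(d)\rceil = O(d \log d)$, so $\tww(G_{d,n}) \leq 2 \cdot 3^{O(d\log d)} = 2^{O(d\log d)} = d^{O(d)}$.

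I expect the main obstacle to be the bookkeeping in the first two steps: pinning down that $(\hcubic^{2d,n})''$ restricted to a closed top cube is literally $C''$, that the cube assignment captures \emph{every} $d$-simplex, and that ``shared $(d-1)$-face $\Rightarrow$ touching cubes'' survives for simplices lying on faces common to several cubes. By contrast, the count of $d$-faces of a doubly subdivided cube and the closing arithmetic are routine, and the passage through $\red(\cdot)$ is exactly the situation \Cref{thm:tww-grid-diag} is designed for.
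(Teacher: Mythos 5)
Your argument is correct, but it follows a genuinely different route from the paper. The paper first carries out a ``local'' contraction phase: it partitions the $d$-simplices of $(\hcubic^{2d,n})''$ into families $\mathscr{F}_{i}=\{F_{i,c}\}$, one per cube $c$ of $\hcubic^{2d,n}$ (colored by $\dim c$), contracts each induced subtrigraph $C_{i,c}$ down to a single node, and observes that the resulting quotient trigraph $G_{d,n}^{\ast}$ is a subtrigraph of $\red(\griddiag_{n,2d})$; this requires a separate bound (\Cref{claim:estimate}) on the size of each cluster and on the number of neighboring clusters so as to control the red degree \emph{during} the contraction phase. You instead sidestep the first epoch entirely: rather than collapsing each local cluster to a point, you spread it out across $k=\lceil\log_{2}f(d)\rceil=O(d\log d)$ auxiliary grid coordinates, so that $G_{d,n}$ injects as an ordinary subgraph of $\griddiag_{n,\,2d+k}$, and then a single application of \Cref{thm:tww-grid-diag} (after passing to $\red(\cdot)$, legitimate since recoloring all edges red cannot decrease twin-width) finishes the job. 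The trade-off is that you pay in grid dimension rather than in a first-epoch red-degree bound; both yield $d^{O(d)}$, your approach requiring less bookkeeping about the intermediate contraction steps, while the paper's approach keeps the ambient grid dimension at $2d$. Your counting of $d$-faces in a doubly subdivided $2d$-cube (top simplices of $C'$ times chains encoded by a function $V\to\{0,\dots,d,\infty\}$) and your locality observation (shared $(d-1)$-face $\Rightarrow$ assigned top cubes at $\ell^{\infty}$-distance at most $1$) are both sound, as is the per-cube injection into $\{1,2\}^{k}$, so I see no gap.
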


Before proving \Cref{thm:tww-Gd}, we show how it implies \Cref{thm:tww-mfd}.

\begin{proof}[Proof of \Cref{thm:tww-mfd}]
Let $\manifold$ be a compact, smooth $d$-dimensional manifold. Consider a~Whitney triangulation $\tri$ of $\manifold$. By \Cref{prop:whitney-dual-induced}, $\dual(\tri)$ is an induced subgraph of $G_{d,n}$ and by \Cref{thm:tww-Gd}, $\tww(G_{d,n}) \leq d^{O(d)}$. Hence, since twin-width is monotone under taking induced subtrigraphs (\Cref{prop:induced}), we obtain $\tww(\dual(\tri))) \leq \tww(G_{d,n}) \leq d^{O(d)}$.
\end{proof}

To complete the proof of \Cref{thm:tww-mfd}, it remains to show \Cref{thm:tww-Gd}.

\begin{proof}[Proof of \Cref{thm:tww-Gd}]
We establish the theorem by exhibiting a~$d^{O(d)}$-contraction sequence $\seq\colon G_{d,n}\leadsto\bullet$.
We will obtain $\seq$ by concatenating two contraction sequences $\seq_1\colon G_{d,n}\leadsto G_{d,n}^\ast$ and $\seq_2\colon G_{d,n}^\ast\leadsto\bullet$, referred to as the \emph{first} and the \emph{second epoch}, where $G_{d,n}^\ast$ is an appropriate subtrigraph of $\red(\griddiag_{n,2d})$, the $2d$-dimensional red $n$-grid with diagonals.
In the following we regard $\hcubic^{2d,n}$ and its subdivisions mainly as abstract complexes, but we will also take advantage of their geometric nature.

\proofsubparagraph{Preparations.} Consider the $(2d+1)$-coloring  $\coloring \colon \hcubic^{2d,n} \rightarrow \{0,\ldots,2d\}$, where we color the cubes of $\hcubic^{2d,n}$ by their dimension, that is, for $c \in \hcubic^{2d,n}$ we set $\coloring(c) = \dim(c)$ (\Cref{fig:h-2-4-colored}).

The coloring $\coloring$ induces a $(2d+1)$-coloring $\coloring'' \colon (\hcubic^{2d,n})'' \rightarrow  \{0,\ldots,2d\}$ of the simplices of the second barycentric subdivision $(\hcubic^{2d,n})''$ as follows. The vertices of the first barycentric subdivision $(\hcubic^{2d,n})'$ are in one-to-one correspondence with the cubes in $\hcubic^{2d,n}$, hence $\coloring$ induces a coloring $\coloring'_0\colon(\hcubic^{2d,n})'(0)\rightarrow \{0,\ldots,2d\}$ via $\coloring'_0(v_c)=\coloring(c)$, where $v_c$ denotes the vertex of $(\hcubic^{2d,n})'$ corresponding to the cube $c \in \hcubic^{2d,n}$ (\Cref{fig:h-2-4-sd1-colored}). Geometrically, $v_c$ is in the barycenter of the cube $c$. When we pass to the second barycentric subdivision, the vertices of $(\hcubic^{2d,n})'$ become vertices of $(\hcubic^{2d,n})''$, thus there is a natural inclusion $\iota\colon(\hcubic^{2d,n})'(0)\rightarrow(\hcubic^{2d,n})''(0)$. Let $\vertices = \image(\iota) \subset (\hcubic^{2d,n})''(0)$ be the image of $(\hcubic^{2d,n})'(0)$ under this inclusion~$\iota$. We color the elements of $\vertices$ identically to $\coloring'_0$, that is, we consider the coloring $\coloring''_\vertices\colon\vertices\rightarrow \{0,\ldots,2d\}$ defined as $\coloring''_\vertices(v)=\coloring'_0(\iota^{-1}(v))$, see \Cref{fig:h-2-4-sd2-V-colored}. Now, pick a simplex $\sigma \in (\hcubic^{2d,n})''$ and note that $\bigcup_{v \in \vertices}\clstar(v) = (\hcubic^{2d,n})''$, i.e., the closed stars of the vertices $v \in \vertices$ cover $(\hcubic^{2d,n})''$. Let $\vertices_\sigma = \{v \in \vertices : \sigma \in \clstar(v)\}$. We now define $\coloring''(\sigma)$ as
\begin{align}
	\coloring''(\sigma) = \min\{\coloring''_\vertices(v) : v \in \vertices_\sigma\}.
\label{eq:def-coloring}
\end{align}
In words, $\coloring''(\sigma)$ is defined as the smallest dimension of any cube $c \in \hcubic^{2d,n}$ such that $\sigma$ belongs to the closed star of the vertex $\iota(v_c)$ in $(\hcubic^{2d,n})''$, where $v_c$ is the vertex of $(\hcubic^{2d,n})'$ corresponding to $c$. We refer to \Cref{fig:cube-corner-colored} for an example.

\begin{figure}[htbp]
	\centering
	\begin{minipage}[t]{.3\linewidth}
		\centering
		\includegraphics[scale=1.1]{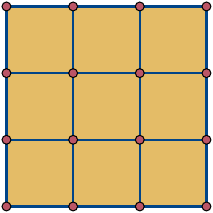}
		\subcaption{The coloring $\coloring$ of the cubes of $\hcubic^{2d,n}$ by their dimension.}\label{fig:h-2-4-colored}
	\end{minipage}\hfill%
	\begin{minipage}[t]{.3\linewidth}
		\centering
		\includegraphics[scale=1.1]{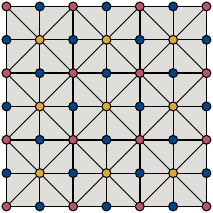}
		\subcaption{The complex $(\hcubic^{2d,n})'$ with its vertices colored by $\coloring'_0$.}\label{fig:h-2-4-sd1-colored}
	\end{minipage}\hfill%
	\begin{minipage}[t]{.3\linewidth}
		\centering
		\includegraphics[scale=1.1]{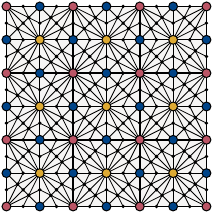}
		\subcaption{The complex $(\hcubic^{2d,n})''$ with its vertices in $\vertices$ colored by $\coloring''_\vertices$.}\label{fig:h-2-4-sd2-V-colored}
	\end{minipage}
	
	\vspace{15pt}
	
	\noindent
	\begin{minipage}[t]{.3\linewidth}
		\centering
		\includegraphics[scale=1.1]{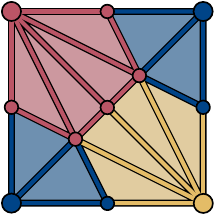}
		\subcaption{The coloring $\coloring''$ near the upper left corner of $(\hcubic^{2d,n})''$. The four larger disks represent vertices that belong to $\vertices$.}\label{fig:cube-corner-colored}
	\end{minipage}\hfill%
	\begin{minipage}[t]{.3\linewidth}
		\centering
		\includegraphics[scale=1.1]{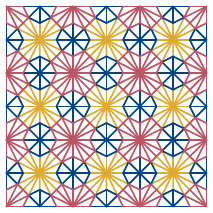}
		\subcaption{The coloring $\coloring''$ restricted to the $d$-simplices of $(\hcubic^{2d,n})''$. The color classes correspond to the families $\mathscr{F}_0$, $\mathscr{F}_1$ and $\mathscr{F}_2$.}\label{fig:h-2-4-sd2-colored}
	\end{minipage}\hfill%
	\begin{minipage}[t]{.3\linewidth}
		\centering
		\includegraphics[scale=1.1]{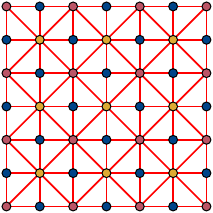}
		\subcaption{\vphantom{$\coloring''$}The trigraph $G_{d,n}^\ast$ obtained from $G_{d,n}$ by contracting each subtrigraph $C_{i,c}$ to a single node. All edges in $G_{d,n}^\ast$ are red.}\label{fig:h-2-4-sd2-contracted}
	\end{minipage}
	\caption{{\sffamily{\bfseries{(a)--(c)}}} Illustrations of the cubical complex $\hcubic^{2d,n}$ for $d=1$ and $n=4$, and of its first and second barycentric subdivisions (which are simplicial complexes) displaying the colorings described in the proof of \Cref{thm:tww-Gd}. {\sffamily{\bfseries{(d)--(f)}}} Three essential steps in the proof of \Cref{thm:tww-Gd}.\label{fig:main-result}}
\end{figure}

\proofsubparagraph{The first epoch.} We start the description of the \emph{first epoch} $\seq_1\colon G_{d,n}\leadsto G_{d,n}^\ast$ by considering the restriction $\coloring''_d\colon(\hcubic^{2d,n})''(d)\rightarrow \{0,\ldots,2d\}$ of the coloring $\coloring''$ to the $d$-simplices of $(\hcubic^{2d,n})''$, see \Cref{fig:h-2-4-sd2-colored}. Note that for each $i \in  \{0,\ldots,2d\}$, the $i$-colored $d$-simplices of $(\hcubic^{2d,n})''$ form a family $\mathscr{F}_i = \{F_{i,c} : c \in \hcubic^{2d,n}(i)\}$ of pairwise-disjoint connected subcomplexes of the \mbox{$d$-skeleton} of~$(\hcubic^{2d,n})''$, where $F_{i,c}$ denotes the subcomplex induced by the $i$-colored $d$-simplices of $(\hcubic^{2d,n})''$ belonging to the closed star of the vertex $\iota(v_c)$, where $v_c$ is the vertex of $(\hcubic^{2d,n})'$ corresponding to the $i$-cube $c$ in $\hcubic^{2d,n}$ (\Cref{fig:h-2-4-sd2-colored}). We also let $\mathscr{F} = \bigcup_{i=0}^{2d} \mathscr{F}_i$.

Since $G_{d,n}$ is defined as the dual graph of the $d$-skeleton of $(\hcubic^{2d,n})''$, the nodes of $G_{d,n}$ are in one-to-one correspondence with the $d$-simplices of $(\hcubic^{2d,n})''$. Let $\gamma\colon (\hcubic^{2d,n})''(d) \rightarrow V(G_{d,n})$ denote the bijection realizing this correspondence. Henceforth, by a slight abuse of notation, we also consider $\coloring''_d$ to be a $(2d+1)$-coloring of $V(G_{d,n})$ via $\coloring''_d(v) = \coloring''_d(\gamma^{-1}(v))$. Let $C_{i,c}$ denote the subtrigraph of $G_{d,n}$ induced by the nodes $\{\gamma(\sigma): \sigma \in F_{i,c}\}$. Note that $\coloring''_d$ assigns the color $i$ to all nodes of $C_{i,c}$. The first epoch $\seq_1\colon G_{d,n}\leadsto G_{d,n}^\ast$ is merely the contraction sequence, where we contract each $C_{i,c}$ (where $0\leq i \leq 2d$ and $c$ is running over $\hcubic^{2d,n}$) to a~single node, in any order, obtaining a trigraph $G_{d,n}^\ast$ (\Cref{fig:h-2-4-sd2-contracted}).

\proofsubparagraph{The second epoch.} $\seq_2\colon G_{d,n}^\ast\leadsto\bullet$ is defined as an optimal contraction sequence of the trigraph $G_{d,n}^\ast$ to a single vertex. Since $G_{d,n}^\ast$ is a subtrigraph of $\red(\griddiag_{n,2d})$, the \emph{$2d$-dimensional red $n$-grid with diagonals}, by \Cref{thm:tww-grid-diag} the width of $\seq_2$ is at most $2 (3^{2d}-1)$.

\proofsubparagraph{Bounding the width of the first epoch.} We now give a rough estimate to show that the first epoch $\seq_1\colon G_{d,n}\leadsto G_{d,n}^\ast$ is a $d^{O(d)}$-contraction sequence. The estimate is based on \Cref{claim:estimate} below, which follows from elementary properties of the hypercubic honeycomb and barycentric subdivisions.

\begin{claim}
For any color $i \in  \{0,\ldots,2d\}$ and cube $c \in \hcubic^{2d,n}$, the subcomplex $F_{i,c}$ of $\hcubic^{2d,n}$ (resp.\ the subtrigraph $C_{i,c}$ of $G_{d,n}$) defined above
\begin{enumerate}
 \item has less than $h_1(d) = 4^{d} ((2d)!)^2 \binom{2d}{d}$ $d$-simplices (resp.\ nodes), and
 \item less than $h_2(d)=9^{d}$  incident subcomplexes $F_{i',c'} \in \mathscr{F}$ (resp.\ adjacent subtrigraphs $C_{i',c'}$).
 \end{enumerate}
\label{claim:estimate}
\end{claim}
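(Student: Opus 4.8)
The plan is to prove each bound by working entirely inside a single top-dimensional cube of the honeycomb $\hcubic^{2d,n}$, since everything is local: $F_{i,c}$ lives in the closed star of $\iota(v_c)$, and that closed star is contained in the union of the (at most $2^{2d-i}$) top-dimensional cubes of $\hcubic^{2d,n}$ containing the $i$-cube $c$. First I would fix a $2d$-cube $Q$ of $\hcubic^{2d,n}$ and analyze $Q''$, the second barycentric subdivision of $Q$. By \Cref{claim:bary-cube}, $Q'$ has $2^{2d}(2d)!$ top-dimensional ($2d$-)simplices, and applying \Cref{claim:bary-simplex} to each of them, $Q''$ has $2^{2d}(2d)! \cdot (2d)! = 4^d ((2d)!)^2$ top-dimensional simplices. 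This already gives the ``$4^d((2d)!)^2$'' factor in $h_1$; the remaining $\binom{2d}{d}$ will come from counting, inside a single $2d$-simplex $\bigsimplex$ of $Q'$, how many of the $(2d)!$ top simplices of $\bigsimplex''$ have a $d$-dimensional face in the star of a prescribed vertex — or, more crudely, I would just bound the number of $d$-simplices of $(\hcubic^{2d,n})''$ lying in $\clstar(\iota(v_c))$: each top simplex of $Q''$ has $\binom{2d+1}{d+1}$ faces of dimension $d$, but it is cleaner to bound the $d$-simplices of $Q''$ incident to a single vertex by (number of top simplices of $Q''$) $\times$ (number of $d$-faces of a $2d$-simplex through a fixed vertex) $= 4^d((2d)!)^2 \cdot \binom{2d}{d}$, using \Cref{claim:simplex-faces} for the factor $\binom{2d}{d}$ (the $d$-faces of a $2d$-simplex containing a fixed vertex are in bijection with the $d$-subsets of the remaining $2d$ vertices). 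Since $F_{i,c}$ is a subcomplex of $\clstar(\iota(v_c))$ and $C_{i,c}$ has one node per $d$-simplex of $F_{i,c}$, item (1) follows, and the strict inequality comes from the fact that not all of these $d$-simplices are $i$-colored.

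For item (2), I would argue that $C_{i,c}$ is adjacent to $C_{i',c'}$ in $G_{d,n}^\ast$ only if some $d$-simplex of $F_{i,c}$ shares a $(d-1)$-face with some $d$-simplex of $F_{i',c'}$; in particular $\clstar(\iota(v_c))$ and $\clstar(\iota(v_{c'}))$ must intersect in $(\hcubic^{2d,n})''$, which forces $c$ and $c'$ to be faces of a common $2d$-cube (indeed, of a common smallest cube containing a simplex in both stars). Within a single $2d$-cube $Q$, the cubes $c'$ that can arise are among the $3^{2d} = 9^d$ faces of $Q$ (each coordinate of a sub-face is ``bottom'', ``top'', or ``full''), so the number of candidate $(i',c')$ is at most $9^d$; subtracting the pair $(i,c)$ itself (which is not a neighbor of $C_{i,c}$) gives the strict bound $h_2(d) = 9^d$. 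I should double-check the case analysis that closed stars of $\iota(v_c), \iota(v_{c'})$ meeting in $(\hcubic^{2d,n})''$ is equivalent to $c, c'$ lying in a common cube of $\hcubic^{2d,n}$ — this is the standard fact that the nerve of the closed-star cover of a barycentric subdivision is the barycentric subdivision itself, i.e., $\iota(v_c), \iota(v_{c'})$ have intersecting closed stars iff $c \subseteq c'$ or $c' \subseteq c$ in $\hcubic^{2d,n}$; but I will keep the cruder ``common $2d$-cube'' bound since it already yields $9^d$ and avoids delicate nerve arguments.

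The main obstacle I anticipate is being careful about what ``incident subcomplex'' and ``adjacent subtrigraph'' mean at the boundary of a cube, where a $d$-simplex of $F_{i,c}$ may simultaneously lie in several $2d$-cubes of $\hcubic^{2d,n}$, so I must make sure I am not over- or under-counting neighbors across cube boundaries; I would handle this by always bounding via ``there exists a $2d$-cube $Q$ containing both $c$ and $c'$'' and then counting sub-cubes of a \emph{fixed} $Q$, accepting a harmless overcount. A secondary point is that I want the bounds to hold for $C_{i,c}$ and $F_{i,c}$ \emph{simultaneously} with the same constants, which is immediate from the bijection $\gamma$ between $d$-simplices of $(\hcubic^{2d,n})''$ and nodes of $G_{d,n}$, together with the observation that adjacency of $C_{i,c}$ and $C_{i',c'}$ in $G_{d,n}$ (hence in $G_{d,n}^\ast$) is witnessed by a shared $(d-1)$-face of constituent $d$-simplices, i.e., by incidence of $F_{i,c}$ and $F_{i',c'}$. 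None of the estimates needs to be tight, so throughout I will prefer the loosest bound that still gives $d^{O(d)}$ overall.
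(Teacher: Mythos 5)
Your plan follows essentially the same route as the paper: for item~(1), work locally inside a single $2d$-cube $Q$ and multiply the counts from \Cref{claim:bary-cube}, \Cref{claim:bary-simplex} and \Cref{claim:simplex-faces}; for item~(2), reduce to the cubical incidence structure of $\hcubic^{2d,n}$. The nerve fact you invoke in item~(2) (closed stars of $\iota(v_c)$ and $\iota(v_{c'})$ meet iff $c$ and $c'$ are comparable) is exactly the content of the paper's appeal to the handle decomposition, and it yields the precise count $3^i + 3^{2d-i} - 2 < 9^d$. So the approaches agree; nevertheless there are two real gaps in what you wrote, both of which you half-noticed.

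For item~(1), you announce that you will bound the $d$-simplices of $(\hcubic^{2d,n})''$ lying in $\clstar(\iota(v_c))$, but the quantity you then actually bound is the number of $d$-simplices of $Q''$ \emph{containing} $\iota(v_c)$ (you explicitly switch to ``$d$-faces of a $2d$-simplex through a fixed vertex,'' which is where $\binom{2d}{d}$ comes from). Those are different sets: $\clstar(\iota(v_c))$ contains every face of every simplex through $\iota(v_c)$, including the $d$-faces that miss $\iota(v_c)$ entirely; the coloring by the minimum in \eqref{eq:def-coloring} places such simplices in $F_{i,c}$ as well. So $F_{i,c}$ is a subcomplex of the \emph{closed} star, not of the open one, and your inclusion $F_{i,c}\subset\{\text{$d$-simplices through }\iota(v_c)\}$ is false. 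The repair is cheap and you already stated it yourself: use $\binom{2d+1}{d+1}$ (all $d$-faces of a top simplex of $Q''$) instead of $\binom{2d}{d}$, which changes $h_1(d)$ by a factor $\tfrac{2d+1}{d+1}<2$ and leaves the $d^{O(d)}$ conclusion intact.

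For item~(2), the ``cruder'' argument you say you prefer does not work as stated, and in the opposite direction from what you expected. If $\dim c<2d$, then $c$ is a face of $2^{2d-\dim c}$ different $2d$-cubes, and the cofaces $c'\supset c$ are \emph{not} all faces of any single $2d$-cube $Q$ (e.g.\ two edges leaving a vertex in opposite directions have no common unit $2d$-cube). Fixing one $Q$ and counting its $3^{2d}$ faces therefore \emph{undercounts} the candidate $c'$, rather than giving a ``harmless overcount.'' You must instead use the comparability fact you already recorded: the $c'$ with $\clstar(\iota(v_c))\cap\clstar(\iota(v_{c'}))\neq\emptyset$ are precisely the faces of $c$ (at most $3^{\dim c}-1$) together with the cofaces of $c$ (at most $3^{2d-\dim c}-1$), summing to at most $3^{2d}-1<9^d$ — which is exactly what the paper extracts from the handle decomposition. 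So keep the nerve argument; the shortcut you wanted to substitute for it is the step that fails.
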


Indeed, these two facts imply that by sequentially contracting each $C_{i,c}$ into a single node, the maximum red degree remains bounded by $O(h_1(d)^3 h_2(d))$ throughout the first epoch.

\begin{claimproof}[Proof of \Cref{claim:estimate}]
To bound the number of $d$-simplices in $F_{i,c}$, note that
\begin{itemize}
	\item $F_{i,c}$ is covered by an appropriate translate of a $2d$-dimensional cube of $\hcubic^{2d}$,
	\item the barycentric subdivision of a $2d$-cube contains $2^{2d}(2d)!$ $2d$-simplices (\Cref{claim:bary-cube}),
	\item the barycentric subdivision of a $2d$-simplex contains $(2d)!$ $2d$-simplices (\Cref{claim:bary-simplex}),
	\item a $2d$-simplex has $\binom{2d}{d}$ faces of dimension $d$ (\Cref{claim:simplex-faces}).
\end{itemize}
Multiplying these numbers, we obtain the first part of the claim.

To bound the number of subcomplexes $F_{i',c'} \in \mathscr{F}$ incident to $F_{i,c}$, observe that the incidences between these subcomplexes reflect those between the handles in the canonical handle decomposition of $\hcubic^{2d,n}$ induced by its cubical structure. Thus, the number of subcomplexes in $\mathscr{F}$ incident to $F_{i,c}$ equals $3^{2d}-1$ if $i \in \{0,2d\}$ and $3^i + 3^{2d-i}-2$ otherwise. Both of these numbers are less than $9^d$, so the second part of the claim also holds.
\end{claimproof}

The width of $\seq$ is the maximum of the widths of $\seq_1$ and $\seq_2$, hence $\tww(G_{d,n}) \leq d^{O(d)}$.
\end{proof}

\section{Triangulations with dual graph of arbitrary large twin-width}
\label{sec:large}

In \Cref{sec:proof} we showed that every compact, smooth $d$-manifold admits a triangulation with dual graph of twin-width at most $d^{O(d)}$. In this section we prove complementary results, showing that triangulations with dual graphs of large twin width are abundant. We shed light on this fact in two ways. First, we show that for any fixed dimension $d \geq 3$, the class of $(d+1)$-regular graphs that can be dual graphs of triangulated $d$-manifolds is \emph{not} small. Second, we show that the $d$-dimensional ball admits triangulations with a dual graph of arbitrarily large twin-width, which extends to every piecewise-linear (hence smooth) $d$-manifold. Both of these results rely on counting arguments, and thus are not constructive.

\subsection{The class of dual graphs of triangulations is not small}

Let us fix an integer $d \geq 3$. Following \cite{chapuy2021number}, we let $M_d(n)$ denote the number of colored\footnote{We refer to \cite[Section 2.1]{chapuy2021number} for the precise definitions. Since colored triangulations form a subfamily of uncolored triangulations (those considered in this paper), any lower bound on $M_d(n)$ is automatically a~lower bound on the number of uncolored $d$-dimensional labeled triangulations with $n$ simplices.} triangulations of closed orientable $d$-dimensional manifolds consisting of $n$ $d$-simplices labeled from $1$ to $n$. We assume for convenience that $n$ is even. By \cite[Theorem 1.1]{chapuy2021number} we have\footnote{Here ``$\preceq$'' denotes a comparison where exponential factors are ignored: more precisely, $f(n) \preceq g(n)$ means that there exists some constant $K > 0$, such that for $n$ large enough, we have $f(n) \leq K^n g(n)$.}
\begin{align}
	n! \cdot n^{n/(2d)} \preceq M_d(n).
	\label{eq:colored-trg}
\end{align}
Now, any $(d+1)$-regular graph $G$ with $n$ vertices can be the dual graph of at most $d!^{n(d+1)/2}$ different $d$-dimensional triangulations. Indeed, if $G$ is the dual graph of some $d$-dimensional triangulation, then each of the $n(d+1)/2$ edges of $G$ corresponds to a face gluing, i.e., an identification of two $(d-1)$-dimensional simplices via a simplicial isomorphism, of which there are $d!$ many.\footnote{This is because a~simplicial isomorphism between two $(d-1)$-simplices $\sigma$ and $\tau$ is determined by a~perfect matching between the $d$ vertices of $\sigma$ and the $d$ vertices of~$\tau$.} Hence by \eqref{eq:colored-trg}, for the number $\Gamma_{d}(n)$ of $(d+1)$-regular graphs on $n$ labeled vertices that are dual graphs of some $d$-dimensional triangulations, we have
\begin{align}
	\frac{n! \cdot n^{n/(2d)}}{d!^{n(d+1)/2}} \preceq \Gamma_d(n).
	\label{eq:number-dual}
\end{align}
As the left-hand side of \eqref{eq:number-dual} grows super-exponentially in $n$, the next theorem directly follows.

\begin{theorem}
For every $d \geq 3$, the class of $(d+1)$-regular graphs that are dual graphs of triangulations of $d$-manifolds is not small. In particular, there are graphs with arbitrarily large twin-width in this class.
\label{thm:trg-not-small}
\end{theorem}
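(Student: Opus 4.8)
The plan is to feed the lower bound \eqref{eq:number-dual} into the contrapositive of \Cref{thm:small}. Concretely, I would argue as follows. By \Cref{thm:small}, every graph class of bounded twin-width is small; hence it suffices to show that the class $\class$ of $(d+1)$-regular graphs that occur as dual graphs of triangulated closed $d$-manifolds is \emph{not} small, since then $\class$ has unbounded twin-width and in particular contains graphs of arbitrarily large twin-width.

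To see that $\class$ is not small, I would start from the asymptotic count of \cite{chapuy2021number} for colored triangulations of closed orientable $d$-manifolds, namely \eqref{eq:colored-trg}, which also lower-bounds the number of (uncolored) labeled $d$-dimensional triangulations on $n$ $d$-simplices since colored triangulations form a subfamily. I would then use the bounded-multiplicity observation already recorded before the statement: a fixed $(d+1)$-regular graph on $n$ vertices has $n(d+1)/2$ edges, and specifying a triangulation with that dual graph amounts to choosing, for each edge, one of the $d!$ simplicial isomorphisms identifying the two corresponding $(d-1)$-faces, so each $(d+1)$-regular graph is the dual graph of at most $d!^{\,n(d+1)/2}$ triangulations. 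Dividing the count \eqref{eq:colored-trg} by this multiplicity yields \eqref{eq:number-dual}: $\Gamma_d(n) \succeq n!\cdot n^{n/(2d)} / d!^{\,n(d+1)/2}$, where $\Gamma_d(n)$ is the number of labeled $(d+1)$-regular graphs in $\class$ on $n$ vertices.

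Finally I would check that the right-hand side of \eqref{eq:number-dual} is $n!$ times a super-exponential factor: writing $n^{n/(2d)} = \exp\big(\tfrac{1}{2d} n \ln n\big)$ and $d!^{\,n(d+1)/2} = \exp\big(\tfrac{(d+1)}{2} n \ln d!\big)$, the numerator's exponent $\tfrac{1}{2d} n\ln n$ dominates the linear-in-$n$ contribution for $n$ large, so $\Gamma_d(n) / (n!\, c^n) \to \infty$ for every fixed constant $c > 1$. Thus no constant $c$ witnesses smallness of $\class$, so $\class$ is not small, and the theorem follows. I do not expect a genuine obstacle here; the only care needed is the bookkeeping that the ``$\preceq$'' notation absorbs exactly the exponential slack introduced by the $d!^{\,n(d+1)/2}$ division while the $n^{n/(2d)}$ factor — the real engine of the argument — survives intact.
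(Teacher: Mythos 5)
Your proposal is correct and reproduces essentially the same argument the paper gives: lower-bound the number of dual graphs via Chapuy--Perarnau's count of colored triangulations divided by the $d!^{\,n(d+1)/2}$ multiplicity, then invoke \Cref{thm:small} in contrapositive. The only thing the paper additionally makes explicit (in a footnote) is that the input graphs may have non-empty boundary is irrelevant here since the count is already for closed orientable manifolds; your bookkeeping on the $\preceq$ slack matches the paper's.
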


\subsection{\texorpdfstring{Complicated triangulations of the $\boldsymbol{d}$-dimensional ball}{Complicated triangulations of the d-dimensional ball}}

In this section we show \Cref{thm:tww-large}, according to which every piecewise-linear (PL) $d$-manifold ($d \geq 3$) admits triangulations with a dual graph of arbitrary large twin-width. To show the existence of such triangulations for every PL-manifold, we rely on the monotonicity of twin-width with respect to taking induced subtrigraphs (\Cref{prop:induced}), the fact that the class of $d$-subdivisions of $(d+1)$-regular graphs is \emph{not} small (\Cref{prop:k-reg-s-sub}) together with the following classical result from the theory of PL-manifolds.

\begin{theorem}[{\cite[Corollary 1]{armstrong1967extending}}]
\label{thm:extending-trg}
Any triangulation of the boundary of a compact piecewise-linear (PL) manifold can be extended to a triangulation of the whole manifold.
\end{theorem}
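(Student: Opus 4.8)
The final statement to address is Theorem~\ref{thm:extending-trg} (Armstrong's extension theorem): \emph{any triangulation of the boundary of a compact piecewise-linear manifold can be extended to a triangulation of the whole manifold}. I will sketch a proof in the PL category.

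\textbf{Setup and strategy.} Let $\manifold$ be a compact PL $d$-manifold with boundary $\partial\manifold$, equipped with a fixed PL structure, i.e.\ a PL equivalence class of triangulations; fix one such ``reference'' triangulation $\tri_0$ of $\manifold$, which restricts to a triangulation $\tri_0|_{\partial\manifold}$ of the boundary. Now suppose we are handed an \emph{arbitrary} triangulation $\mathscr{B}$ of $\partial\manifold$ that is PL-equivalent to $\tri_0|_{\partial\manifold}$ (this compatibility is implicit in the statement, since $\partial\manifold$ carries the induced PL structure). The goal is to produce a triangulation $\tri$ of $\manifold$ with $\tri|_{\partial\manifold} = \mathscr{B}$. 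The plan is to first transport $\mathscr{B}$ to a collar, then fill in the interior using the reference triangulation and a PL general-position (pseudoradial projection) argument.

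\textbf{Step 1: Collar.} Invoke the PL collar neighbourhood theorem: $\partial\manifold$ has a neighbourhood in $\manifold$ PL-homeomorphic to $\partial\manifold \times [0,1]$, with $\partial\manifold = \partial\manifold \times \{0\}$. Using $\mathscr{B}$ on $\partial\manifold \times \{0\}$ and any triangulation of $\partial\manifold \times \{1\}$ that is PL-compatible with what lies beyond the collar, triangulate the collar $\partial\manifold \times [0,1]$ so that it restricts to $\mathscr{B}$ on the $\{0\}$ end; concretely, one triangulates the prism over each simplex of a common subdivision, which is a standard PL construction. Let $\manifold' = \manifold \setminus (\partial\manifold \times [0,1))$ be the complementary compact PL $d$-manifold (PL-homeomorphic to $\manifold$), whose boundary copy $\partial\manifold \times \{1\}$ now carries a specified triangulation $\mathscr{B}'$.

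\textbf{Step 2: Filling the interior.} It remains to extend $\mathscr{B}'$ over $\manifold'$, where $\manifold'$ already comes with the reference triangulation $\tri_0'$ (restricting $\tri_0$ and re-triangulating across the collar seam). The triangulations $\mathscr{B}'$ and $\tri_0'|_{\partial\manifold'}$ of $\partial\manifold'$ are PL-equivalent, so by the uniqueness part of the PL collar/regular-neighbourhood machinery there is a PL homeomorphism $\partial\manifold' \times [0,1] \to (\text{inner collar of } \partial\manifold')$ carrying $\mathscr{B}' \times \{0\}$ to $\mathscr{B}'$ and $\tri_0'|_{\partial\manifold'} \times \{1\}$ to $\tri_0'|_{\partial\manifold'}$; triangulating this inner collar by prisms and then using $\tri_0'$ on the rest of $\manifold'$ completes a triangulation $\tri'$ of $\manifold'$ restricting to $\mathscr{B}'$. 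Gluing $\tri'$ to the collar triangulation from Step~1 along $\partial\manifold \times \{1\}$ (where they agree by construction) yields the desired triangulation $\tri$ of $\manifold$ with $\tri|_{\partial\manifold} = \mathscr{B}$.

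\textbf{Main obstacle.} The essential input, and the step to treat carefully, is the compatibility of the two given boundary triangulations as PL structures, together with the PL collar neighbourhood theorem and its \emph{uniqueness} (the statement that any two collars are ambient-isotopic rel the boundary). This is where the hypothesis that $\manifold$ is PL (rather than merely topological) is used, and it is exactly the content of Armstrong's argument in \cite{armstrong1967extending}; the rest is routine prism-triangulation bookkeeping. One must also check that ``triangulation'' in the statement is taken in the combinatorial/PL sense compatible with the ambient PL structure — otherwise the claim is false in high dimensions (e.g.\ exotic triangulations of the boundary need not extend), so the PL hypothesis is not cosmetic.
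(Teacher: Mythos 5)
The paper states \Cref{thm:extending-trg} as a black box with a citation to Armstrong's \emph{Extending triangulations}; it gives no proof of its own, so there is no argument in the paper to compare your sketch against.

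As a self-contained sketch, yours follows the standard collar-based route, but it contains a circularity and a misuse of collar uniqueness. In Step~1 you propose to triangulate the collar $\partial\manifold \times [0,1]$ using $\mathscr{B}$ on $\{0\}$ and ``any triangulation of $\partial\manifold \times \{1\}$ that is PL-compatible with what lies beyond the collar'' on $\{1\}$ --- but ``what lies beyond the collar'' is $\manifold'$, whose triangulation is only produced in Step~2, so the two steps presuppose one another. The cleaner organization decouples them: fix one PL triangulation $\tri_0$ of $\manifold$. Since $\mathscr{B}$ and $\tri_0|_{\partial\manifold}$ are PL-compatible triangulations of $\partial\manifold$, they admit a common subdivision $K$; hence $\partial\manifold \times [0,1]$ can be triangulated in two layers (prisms and cones over the simplices of $K$, inducting on dimension) so that the restriction to $\{0\}$ is $\mathscr{B}$ and to $\{1\}$ is $\tri_0|_{\partial\manifold}$. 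Gluing this collar triangulation onto $\tri_0$ triangulates the adjunction $\manifold \cup_{\partial\manifold} (\partial\manifold \times [0,1])$, which is PL-homeomorphic to $\manifold$ by the collar theorem and restricts to $\mathscr{B}$ on the new boundary. In your Step~2, the ``uniqueness part of the PL collar machinery'' does not supply a PL homeomorphism carrying two prescribed triangulations to one another as stated; all that is genuinely used is that attaching an external collar preserves the PL type. Finally, your closing caveat about PL-compatibility is apt in spirit --- the prism-over-a-common-subdivision step certainly breaks without it --- though whether the \emph{statement} itself fails for an incompatible boundary triangulation is a separate matter (for a ball, coning extends any triangulation of the boundary sphere, compatible or not), so it is safer to say the hypothesis is needed for the proof rather than assert outright falsity of the conclusion.
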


We first show that already the $d$-dimensional ball $\ball^d = \{x \in \mathbb{R}^d : \|x\| \leq 1\}$ admits triangulations with dual graph of arbitrary large twin-width. More precisely, we prove:

\begin{theorem}
\label{thm:tww-large-ball}
For every $\integer \in \mathbb{N}$ there is a triangulation $\tri_\integer$ of the $d$-dimensional ball~$\ball^d$, such that $\tww(\dual(\tri_\integer)) \geq \integer$ and $\partial\tri_\integer = \partial\Delta^d$, the boundary of the standard $d$-simplex~$\Delta^d$.
\end{theorem}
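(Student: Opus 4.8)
\textbf{Proof plan for \Cref{thm:tww-large-ball}.}
The plan is to build $\tri_\integer$ from two pieces glued along their common boundary: an ``outer shell'' whose boundary is $\partial\Delta^d$ and whose dual graph is harmless, and an ``inner gadget'' that is combinatorially a subdivided $(d+1)$-regular graph and therefore forces large twin-width. Concretely, I would first invoke \Cref{prop:k-reg-s-sub} with $k=d+1$ (legal since $d\ge 3$ gives $k\ge 4$) and $s=d$: the class $\sd_d(\class_{d+1})$ of $d$-subdivisions of $(d+1)$-regular simple graphs is not small, hence has unbounded twin-width. So for every $\integer$ there is a $(d+1)$-regular graph $H$ such that $\tww(\sd_d(H)) \ge \integer$. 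The goal is then to engineer a triangulation $\tri_\integer$ of $\ball^d$ whose dual graph $\dual(\tri_\integer)$ contains $\sd_d(H)$ as an induced subgraph; monotonicity of twin-width under induced subtrigraphs (\Cref{prop:induced}) then yields $\tww(\dual(\tri_\integer))\ge \tww(\sd_d(H))\ge \integer$.

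The key step is realizing an arbitrary $(d+1)$-regular graph $H$, after $d$-fold subdivision, as (an induced piece of) the dual graph of a triangulated $d$-ball. I would proceed as follows. Take one $d$-simplex $\Delta_v$ for each vertex $v$ of $H$; since $v$ has degree $d+1$ and $\Delta_v$ has exactly $d+1$ facets, assign to each edge of $H$ at $v$ one facet of $\Delta_v$. For each edge $e=\{u,v\}$ of $H$, I want to connect the chosen facet of $\Delta_u$ to the chosen facet of $\Delta_v$ by a ``tube'' triangulated so that its dual graph is a path of the appropriate length; the standard way is to insert a chain of $d$-simplices each glued to the next along a facet — a triangulated $d$-dimensional prism $(\partial\Delta^{d-1})\times[0,1]$ or, more simply, a ``stacked'' chain — so that edge $e$ becomes a path on roughly $d+2$ or so dual vertices. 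By choosing the chain lengths equal I get exactly a uniform $s$-subdivision; a small amount of care (possibly a further barycentric subdivision, or choosing $s$ a fixed larger constant and re-running \Cref{prop:k-reg-s-sub}, which still applies for any fixed $s\ge 0$) lets me hit precisely $s=d$ or any convenient fixed $s$. The union of all the $\Delta_v$ and all the tubes is a $d$-manifold with boundary; its dual graph is exactly $\sd_s(H)$ together possibly with some extra low-degree vertices coming from the internal simplices of the tubes — and by \Cref{prop:submanifold-induced} the dual graph of this sub-collection sits as an induced subgraph inside the dual graph of any triangulation of a larger manifold containing it.

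It remains to turn this handlebody-like manifold $\mathcal{W}$ into $\ball^d$ with the prescribed boundary $\partial\Delta^d$. Here I would use that $\mathcal{W}$ is a compact PL $d$-manifold whose topological type is that of a ball provided the graph $H$ is chosen, say, to be a tree-like thickening — but in fact it is cleaner to not worry about the intrinsic topology of $\mathcal{W}$ and instead embed $\mathcal{W}$ as a PL submanifold-with-boundary inside the interior of $\ball^d$ (possible since a triangulated $d$-complex PL-embeds in $\mathbb{R}^{2d+1}$ by \Cref{thm:geometric-realization}, and with more care actually in $\ball^d$ once we arrange $\mathcal W$ to be a ball or a disjoint union of balls sitting in a coordinate chart). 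The complement $\ball^d \setminus \interior(\mathcal{W})$ is then a compact PL manifold whose boundary consists of $\partial\mathcal{W}$ together with $\partial\ball^d$; triangulate $\partial\ball^d$ as $\partial\Delta^d$ and keep the induced triangulation on $\partial\mathcal W$, then apply \Cref{thm:extending-trg} (extending a boundary triangulation of a compact PL manifold to the whole manifold) to triangulate the complement compatibly. Gluing gives the desired triangulation $\tri_\integer$ of $\ball^d$ with $\partial\tri_\integer=\partial\Delta^d$ and with $\dual(\tri_\integer)$ containing $\sd_s(H)$ as an induced subgraph.

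\textbf{Main obstacle.} The delicate point is the PL/topological bookkeeping in the last paragraph: ensuring that the gadget $\mathcal{W}$ can be realized as a \emph{PL ball} (or at least PL-embedded in $\ball^d$ so that the complement is genuinely a compact PL manifold to which \Cref{thm:extending-trg} applies), and that the ``tubes'' connecting the simplices $\Delta_v$ are triangulated so that the dual graph is precisely a uniform subdivision of $H$ with no unintended extra adjacencies. The graph-theoretic core — invoking \Cref{prop:k-reg-s-sub} and \Cref{prop:induced} — is immediate; the real work is making the handle/tube construction clean enough that \Cref{prop:submanifold-induced} delivers $\sd_s(H)$ as an \emph{induced} subgraph and that the resulting space is visibly a triangulated $d$-ball. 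I expect that choosing $\mathcal W$ to be a small perturbation of a triangulated tree of $d$-simplices (hence manifestly a PL ball) and the tubes to be triangulated prisms keeps both issues under control.
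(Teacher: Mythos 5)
Your proposal follows essentially the same route as the paper: invoke \Cref{prop:k-reg-s-sub} with $k=d+1$, $s=d$ to get a $d$-subdivided $(d+1)$-regular graph $G_\integer$ of twin-width at least $\integer$; thicken it by taking one $d$-simplex per vertex and a stacked-prism tube per edge; check via \Cref{prop:submanifold-induced} and \Cref{prop:induced} that this forces $\tww(\dual(\tri_\integer))\geq\integer$; and then complete the thickening to a triangulated $d$-ball with boundary $\partial\Delta^d$ by invoking \Cref{thm:extending-trg} on the complement. You correctly identify the embedding-and-completion step as the delicate point, but the way you propose to resolve it is off. First, you do not want (and cannot in general have) the gadget $\mathcal{W}$ be a PL ball: a $(d+1)$-regular graph always has cycles, so its regular neighborhood is a handlebody, not a ball. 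This is harmless, because all that is needed is that the complement of $\interior(\mathcal{W})$ inside a large simplex is a compact PL manifold with the two expected boundary components, which follows from $\mathcal{W}$ being a codimension-zero PL submanifold, no ball-ness required. Second, the appeal to \Cref{thm:geometric-realization} and $\mathbb{R}^{2d+1}$ is a dead end: a complex living in $\mathbb{R}^{2d+1}$ does not sit inside $\ball^d$. The correct observation (used in the paper) is that since $d\geq 3$, the \emph{graph} $G$ has a straight-line embedding in $\mathbb{R}^d$ already; take $\mathscr{N}$ to be a regular neighborhood of that embedding, and then pass to a sufficiently fine iterated barycentric subdivision $\tri_{\mathscr{N}}^{(\ell)}$ of your abstract triangulation so that it embeds simplex-wise linearly in $\mathbb{R}^d$. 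Finally, there is a small bookkeeping step you omit: the extension theorem gives a triangulation of the complement whose inner boundary is the \emph{subdivided} boundary $\partial(\tri_{\mathscr{N}}^{(\ell)})$, not $\partial\tri_{\mathscr{N}}$ itself, so one must interpose a triangulated collar $\partial\tri_{\mathscr{N}}\times[0,1]$ whose two ends realize $\partial\tri_{\mathscr{N}}$ and $\partial(\tri_{\mathscr{N}}^{(\ell)})$, respectively, before gluing. With those corrections your outline matches the paper's proof.
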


\begin{proof}
Let $G_\integer$ be a $d$-subdivision of a $(d+1)$-regular graph $G$ such that $\tww(G_\integer) \geq \integer$. The existence of such a graph is guaranteed by \Cref{prop:k-reg-s-sub}. Let $\altmanifold$ be a $d$-manifold homeomorphic to a closed regular neighborhood of a straight-line embedding of $G$ in $\mathbb{R}^d$. Informally, $\altmanifold$ can be seen as a $d$-dimensional thickening of the graph $G$.

Construct an abstract triangulation of $\altmanifold$ as follows. Take a $d$-simplex $\sigma_v$ for each node $v$ of~$G$, and fix a one-to-one correspondence between the $d+1$ facets of $\sigma_v$ and the $d+1$ arcs incident to $v$ in $G$. For every arc $\{u,v\} \in E(G)$, take a simplicial $d$-prism $P_{\{u,v\}} = \sigma_{\{u,v\}} \times [0,1]$, where $\sigma_{\{u,v\}}$ is a $(d-1)$-simplex, and attach $P_{\{u,v\}}$ to the simplices $\sigma_u$ and $\sigma_v$ by identifying $\sigma_{\{u,v\}} \times \{0\}$ (resp.\ $\sigma_{\{u,v\}} \times \{1\}$) with the facet of $\sigma_u$ (resp.\ $\sigma_v$) that corresponds to the arc $\{u,v\}$. Now triangulate each prism $P_{\{u,v\}}$ with a minimal triangulation consisting of $d$ $d$-simplices stacked onto each other, see \Cref{ex:trg-prism} below.

Let $\tri_\altmanifold$ denote the resulting triangulation of $\altmanifold$.

\begin{claim}
\label{claim:dual}
For the dual graph of the triangulation $\tri_\altmanifold$ we have $\dual(\tri_\altmanifold)=G_\integer$.
\end{claim}

\begin{claimproof}
The claim follows immediately from the facts that the dual graph of the constructed triangulation of the simplicial $d$-prism is merely a path of length $d$, and $G_\integer$ is the $d$-subdivision of the $(d+1)$-regular graph $G$ on which the triangulation $\tri_\altmanifold$ is modeled.
\end{claimproof}

Pick a sufficiently large $\ell \in \mathbb{N}$ such that the $\ell$\textsuperscript{th} iterated barycentric subdivision $\tri_\altmanifold^{(\ell)}$ of $\tri_\altmanifold$ embeds linearly in $\mathbb{R}^d$ and fix a simplex-wise linear embedding $\embedding\colon\geomrel{\tri_\altmanifold^{(\ell)}} \rightarrow \mathbb{R}^d$. Consider a large geometric $d$-simplex $\bigsimplex \subset \mathbb{R}^d$ that contains the image $\image(\embedding)$ of $\embedding$ in its interior. Now $\bigsimplex_\circ = \bigsimplex \setminus \interior(\image(\embedding))$ is a PL manifold\footnote{It is folklore that every codimension zero submanifold of a Euclidean space is a PL manifold, see, e.g., \cite[p.\ 118]{levitt1987intrinsic} or \cite[Remark 1.1.10]{waldhausen2013spaces}.} with triangulated boundary, so by \Cref{thm:extending-trg} this boundary triangulation can be extended to a triangulation $\tri_{\bigsimplex_\circ}$ of the entire manifold $\bigsimplex_\circ$. The boundary of $\tri_{\bigsimplex_\circ}$ has two connected components: $\partial_1\tri_{\bigsimplex_\circ} \cong \partial (\tri_\altmanifold^{(\ell)})$ and $\partial_2\tri_{\bigsimplex_\circ} \cong \partial\Delta^d$.

Let $\altaltmanifold$ be a $d$-manifold homeomorphic to $\partial\tri_\altmanifold \times [0,1]$. Take a triangulation $\tri_\altaltmanifold$ of $\altaltmanifold$, such that for the two boundary components we have $\partial_1\tri_\altaltmanifold \cong \partial\tri_\altmanifold$ and $\partial_2\tri_\altaltmanifold \cong \partial(\tri_\altmanifold^{(\ell)})$. One way to construct such a triangulation is as follows. First, consider the decomposition $\prisms_\altaltmanifold$ of $\altaltmanifold$ into simplicial $d$-prisms induced by the product structure $\partial\tri_\altmanifold \times [0,1]$. That is, $\prisms_\altaltmanifold$ consists of $d$-prisms $\prism_\sigma \cong \sigma \times [0,1]$, one for each $(d-1)$-simplex $\sigma$ of $\partial\tri_\altmanifold$, glued together along their vertical boundary prisms the same way as the simplices of $\partial\tri_\altmanifold$. The boundary $\partial\prisms_\altaltmanifold$ of $\prisms_\altaltmanifold$ has two connected components: $\partial_1\prisms_\altaltmanifold$ and $\partial_2\prisms_\altaltmanifold$, each combinatorially isomorphic to $\partial\tri_\altmanifold$. Next, pass to the $\ell$\textsuperscript{th} iterated barycentric subdivision of $\partial_2\prisms_\altaltmanifold$. This operation turns each prism $\prism_\sigma$ of $\prisms_\altaltmanifold$ into a polyhedral cell $R_\sigma$. These cells form a polyhedral decomposition $\polydecomp_\altaltmanifold$ of $\altaltmanifold$, where $\partial_1\polydecomp_\altaltmanifold \cong \partial\tri_\altmanifold$ and $\partial_2\polydecomp_\altaltmanifold \cong (\partial\tri_\altmanifold)^{(\ell)} = \partial(\tri_\altmanifold^{(\ell)})$. Triangulate $R_\sigma$ as follows. Consider an order $c_1 \prec c_2 \prec \cdots$ of the vertical cells\footnote{These are precisely those cells that are not contained in the boundary of $\polydecomp_\altaltmanifold$.} of $R_\sigma$, where $c_i \prec c_j$ implies $\dim(c_i) \leq \dim(c_j)$. Place a new vertex $v_i$ in the barycenter of $c_i$ and, iterating over the vertical cells in the above order, triangulate $c_i$ by coning from $v_i$ over its (already triangulated) boundary $\partial c_i$. It is clear that the resulting triangulation of $R_\sigma$ is symmetric with respect to the symmetries of its base simplex $\sigma$. Applying this procedure for each polyhedral cell $R_\sigma$ of $\polydecomp_\altaltmanifold$ yields a~triangulation $\tri_\altaltmanifold$ of $\altaltmanifold$ with the desired properties.

Now the triangulation $\tri_\integer$ of $\ball^d$ is obtained by gluing together $\tri_\altmanifold$, $\tri_\altaltmanifold$ and $\tri_{\bigsimplex_\circ}$ via the identity maps along the isomorphic boundary-pairs $\partial\tri_\altmanifold \cong \partial_1\tri_\altaltmanifold$ and $\partial_2\tri_\altaltmanifold \cong \partial_1\tri_{\bigsimplex_\circ}$.

To conclude, note that the $d$-simplicies of $\tri_\altmanifold$ triangulate a submanifold of $\|\tri_\integer\|$, hence by \Cref{prop:submanifold-induced} the graph $\dual(\tri_\altmanifold)$ is an induced subgraph of $\dual(\tri_\integer)$. By \Cref{claim:dual}, $\dual(\tri_\altmanifold) = G_\integer$ and by the initial assumption $\tww(G_\integer) \geq \integer$, hence by \Cref{prop:induced}, $\dual(\tri_\integer) \geq \integer$ as well.
\end{proof}

\begin{proof}[Proof of \Cref{thm:tww-large}]
Let $\manifold$ be an arbitrary PL-manifold possibly with non-empty boundary and $\tri_\circ$ be a simplicial triangulation of $\manifold$. Consider a triangulation $\tri_\integer$ of the $d$-ball with $\tww(\dual(\tri_\integer)) \geq \integer$ as in \Cref{thm:tww-large-ball}. Let $\Delta$ be a $d$-simplex of $\tri_\circ$ that is disjoint from $\partial\manifold$ (if $\tri_\circ$ does not contain such a simplex, just replace $\tri_\circ$ with its second barycentric subdivision). Since $\tri_\circ$ is simplicial, $\Delta$ is embedded in $\tri_\circ$ and is topologically a $d$-ball. Now replace $\Delta$ with $\tri_\integer$ by first removing $\Delta$ from $\tri_\circ$, thereby creating a boundary component isomorphic to $\partial\Delta$, then gluing $\partial\tri_\integer$ to this new boundary component via a simplicial isomorphism. (Note that this is possible since $\partial\tri_\integer \cong \partial\Delta$.) Let $\tri$ denote the resulting triangulation of $\manifold$. By \Cref{prop:induced,prop:submanifold-induced}, it follows that $\tww(\dual(\tri)) \geq \tww(\dual(\tri_\integer)) \geq \integer$.
\end{proof}

\begin{example}[triangulating simplicial $d$-prisms]\label{ex:trg-prism}
Let $\sigma$ be a $(d-1)$-simplex with vertices $\{v_1,\ldots,v_d\}$, and let $\prism_\sigma = \sigma \times [0,1]$ be its associated simplicial $d$-prism. Note that for the vertex set of $\prism_\sigma$ we have $\prism_\sigma(0) = \{(v_1,0),\ldots,(v_d,0),(v_1,1),\ldots,(v_d,1)\}$. A triangulation of $\prism_\sigma$ with $d$-simplices $\{\overline{\sigma}_1,\ldots,\overline{\sigma}_d\}$ can be obtained as follows. We define $\overline{\sigma}_i$ iteratively, through their vertex sets. First, set $\overline{\sigma}_1 = \{ (v_1,0),\ldots,(v_d,0),(v_1,1)\}$. Next, for $2 \leq i \leq d$ the vertex set of $\overline{\sigma}_i$ is simply obtained from that of $\overline{\sigma}_{i-1}$ by replacing $(v_i,0)$ with $(v_i,1)$.
\end{example}

\begin{figure}[htbp]
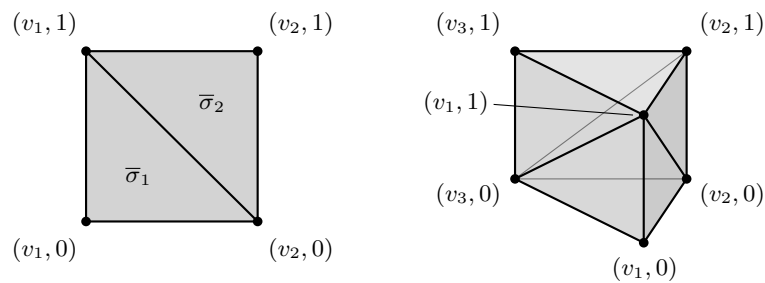

	\centering
	
	\bigskip
	\begin{overpic}{triangular-prism}
		\put (-11,-1){\small{$(v_1,0)$}}
		\put (30.75,-1){\small{$(v_2,0)$}}
		\put (-11,36){\small{$(v_1,1)$}}
		\put (30.75,36){\small{$(v_2,1)$}}
		\put (7.5,11){\small{$\overline{\sigma}_1$}}
		\put (19.5,23){\small{$\overline{\sigma}_2$}}
		\put (86.75,-4.25){\small{$(v_1,0)$}}
		\put (101,8){\small{$(v_2,0)$}}
		\put (58,8){\small{$(v_3,0)$}}
		\put (101,36){\small{$(v_2,1)$}}
		\put (58,36){\small{$(v_3,1)$}}
		\put (56,23){\small{$(v_1,1)$}}
	\end{overpic}
	\bigskip

	\caption{The considered triangulations of the simplicial $d$-prism for $d=2$ and $d=3$.}
	\label{fig:prism}
\end{figure}


\bibliography{references}

\end{document}